\DeclareFontFamily{U}{matha}{\hyphenchar\font45}
\DeclareFontShape{U}{matha}{m}{n}{
	<5> <6> <7> <8> <9> <10> gen * matha
	<10.95> matha10 <12> <14.4> <17.28> <20.74> <24.88> matha12
}{}
\DeclareSymbolFont{matha}{U}{matha}{m}{n}
\DeclareMathSymbol{\Lt}{3}{matha}{"CE}
\DeclareMathSymbol{\Gt}{3}{matha}{"CF}
\DeclareSymbolFont{mathc}{OML}{txmi}{m}{it}
\DeclareMathSymbol{\varuu}{\mathord}{mathc}{117}
\DeclareMathSymbol{\varvv}{\mathord}{mathc}{118}
\DeclareMathSymbol{\varww}{\mathord}{mathc}{119}
\def\valpha{\text{\scalebox{0.84}[1.02]{$\alpha$}}}   
\def\vepsilon{\upvarepsilon}
\def\vnu{\text{{\scalebox{0.9}[1]{$\nu$}}}}
\newcommand{\BR}{{\mathbb {R}}} 
\newcommand{\BZ}{{\mathbb {Z}}}
\newcommand{\GL}{{\mathrm {GL}}}
\newcommand{\SL}{{\mathrm {SL}}}
\newcommand{\ra}{\rightarrow} 
\def\sumx{\sideset{}{^\star}\sum}
\def\mod{\mathrm{mod}\,  }
\def\nd{\mathrm{d}}
\def\lp {\left (}
\def\rp {\right )}
\def\shskip{\hspace{0.5pt}}
\newcommand{\delete}[1]{}
\theoremstyle{plain}
\newtheorem{thm}{Theorem} \newtheorem{cor}[thm]{Corollary}
\newtheorem{lem}{Lemma}[section]
\theoremstyle{remark} 
\newtheorem{remark}{Remark}[section] 
\newtheorem{defn}{Definition}[section]
\newtheorem*{acknowledgement}{Acknowledgements}
\numberwithin{equation}{section}
\begin{document}
	
	\title[$\mathrm{GL}_3 \hspace{-1pt} \times \hspace{-1pt} \mathrm{GL}_2$\,$L$-functions at Special Points]{{The Second Moment of  $\mathrm{GL}_3 \times \mathrm{GL}_2$   $L$-functions at Special Points}}

	\begin{abstract}
	Let $\phi$ be a fixed Hecke--Maass form for  $\mathrm{SL}_3 (\mathbb{Z})$ and $u_j $ traverse an orthonormal basis of Hecke--Maass  forms for  $\mathrm{SL}_2 (\BZ) $. Let $1/4+t_j^2$ be the Laplace eigenvalue of $u_j $. In this paper, we prove the mean  Lindel\"of hypothesis  for the second moment of $ L (1/2+it_j, \phi \times u_j) $ on $ T < t_j \leqslant T + \sqrt{T} $. Previously, this was proven by Young on $ t_j \leqslant T$. Our approach is more direct as we do not  apply the Poisson summation formula to detect the `Eisenstein--Kloosterman' cancellation. 
	\end{abstract}
	
	\author{Zhi Qi}
	\address{School of Mathematical Sciences\\ Zhejiang University\\Hangzhou, 310027\\China}
	\email{zhi.qi@zju.edu.cn}
	
	\thanks{The author was supported by National Key R\&D Program of China No. 2022YFA1005300.}

	\subjclass[2020]{11M41, 11F72}
	\keywords{large sieve inequality, Rankin--Selberg $L$-functions, Kuznetsov formula.}
	
	\maketitle

	\section{Introduction}
	
	
	Let $\{u_j (z)\}$ be an orthonormal basis of (even or odd) Hecke--Maass cusp forms on  $\mathrm{SL}_2 (\BZ) \backslash \mathbb{H}^2$.   Let $\lambda_j = s_j (1-s_j)$  
	be the Laplace eigenvalue of $u_j (z)$,  
	with $s_j = 1/2+ i t_j$ ($t_j > 0$). 
		The Fourier expansion of $u_j (z)$ reads:
	\begin{align*}
		u_j (x+iy) =   \sqrt{y} \sum_{n \neq 0}  \rho_j (n) K_{i t_j} (2\pi |n| y) e (n x), 
	\end{align*}
	where as usual $K_{\vnu} (x)$ is the $K$-Bessel function and $e (x) = \exp (2\pi i x)$. Let $\lambda_j (n)$  be the $n$-th Hecke eigenvalue of $ u_j (z) $. It is well known that $ \rho_j (  n) =   \lambda_j (n) \rho_j (  1)  $ and $ \lambda_j (n) $ is real-valued for any $n \geqslant 1$. Define the harmonic weight 
	\begin{align*}
		\omega_j = \frac {|\rho_j (1)|^2} {\cosh \pi t_j} .    
	\end{align*}

\delete{Let $\phi_2$ be a (holomorphic or Maass) newform for a Hecke congruence subgroup of  $ \SL_2 (\BZ) $, $\phi_3$ or $\phi_4$ be a Hecke--Maass form for $\SL_3 (\BZ)$ or $\SL_4 (\BZ)$ respectively. For simplicity, we shall write $\phi$ if there is no confusion in the context. }

For any fixed Hecke--Maass cusp form $\phi$ for $\SL_3 (\BZ)$ or $\SL_4 (\BZ)$,  Young  \cite{Young-GL(3)-Special-Points} and  Chandee--Li  \cite{Chandee-Li-GL(4)-Special-Points}  proved  that the second moment of the Rankin--Selberg $L$-function $L (s, \phi \times u_j)$ at the special point $s=s_j$ satisfies the mean Lindel\"of hypothesis:  
	\begin{align}\label{1eq: mean Lindelof, T}
		\sum_{t_j \leqslant T}   \left|L (s_j, \phi  \times u_j) \right|^2 \Lt_{\phi, \vepsilon}   T^{2 +\vepsilon}. 
	\end{align} 

The purpose of this paper is to extend Young's $\GL_3 \times \GL_2$ mean Lindel\"of bound  to the short-interval case for $ T < t_j \leqslant T + \sqrt{T} $.  

\begin{thm}\label{thm G:(3)}
	Let $ \phi $ be an  $\SL_3 (\BZ)$ Hecke--Maass cusp form. Then for $\sqrt{T} \leqslant M \leqslant T$ we have
	\begin{align}\label{1eq: mean Lindelof, GL(3)}
		\sum_{T < t_j \leqslant T + M}   \left|L (s_j, \phi  \times u_j) \right|^2 \Lt_{\phi, \vepsilon}   M T^{1 +\vepsilon}, 
	\end{align} 
where the implied constant depends only  on $\phi$ and $\vepsilon$. 
\end{thm}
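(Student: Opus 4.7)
The plan is to apply the standard spectral recipe --- approximate functional equation, open the square, and apply the Kuznetsov trace formula --- but to bound the resulting Kloosterman-sum contributions \emph{directly} via a large sieve argument, sidestepping the Poisson summation step used in Young's treatment. At $s=1/2+it_j$, three of the six archimedean Langlands parameters of $\phi\times u_j$ are of bounded size while the other three have size $\asymp t_j$, so the analytic conductor is $\asymp T^3$ and the AFE represents $L(s_j,\phi\times u_j)$, up to negligible error, as a smoothed Dirichlet sum of length $\asymp T^{3/2+\vepsilon}$. Inserting the harmonic weights $\omega_j$ and a smooth bump $h(t)$ concentrated on $(T,T+M]$, opening the square, and swapping the order of summation produces a bilinear form
\[
\sum_{n,m\,\Lt\, T^{3/2+\vepsilon}} \frac{A_\phi(\ast,n)\,\overline{A_\phi(\ast,m)}}{\sqrt{nm}}\,\sum_j \omega_j\, H(t_j;n,m)\,\lambda_j(n)\,\lambda_j(m),
\]
in which $H(t;n,m)=h(t)(m/n)^{it}V(n,t)\overline{V(m,t)}$ absorbs the AFE phase $(m/n)^{it_j}$.

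Applying the Kuznetsov formula to the inner $j$-sum decomposes it into a delta term, an Eisenstein term, and a Kloosterman term. The delta term $\delta_{n=m}$ is controlled, using the Rankin--Selberg estimate $\sum_{n\leqslant N}|A_\phi(1,n)|^2\Lt N$ and the Weyl-law integral $\int_T^{T+M}t\,dt\asymp MT$, by
\[
MT\cdot\!\!\sum_{n\,\Lt\,T^{3/2+\vepsilon}}\!\!\frac{|A_\phi(\ast,n)|^2}{n}\Lt MT^{1+\vepsilon},
\]
and the Eisenstein contribution is handled analogously via standard bounds on $\zeta$. The crux is then the Kloosterman term
\[
\sum_c \frac{1}{c} \sum_{n,m} \frac{A_\phi(\ast,n)\,\overline{A_\phi(\ast,m)}}{\sqrt{nm}}\, S(m,n;c)\, \widetilde{H}\!\lp\frac{4\pi\sqrt{mn}}{c};n,m\rp,
\]
where $\widetilde{H}$ is the Kuznetsov--Bessel transform of $H$. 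A uniform analysis of $J_{2it}(x)$ together with stationary phase in $t$ --- in which the AFE phase $(m/n)^{it}$ couples with the Bessel phase --- localizes the dominant moduli to $c\asymp\sqrt{mn}/t\asymp T^{1/2}$ and produces an oscillatory kernel of pointwise size $\asymp M/\sqrt{T}$ (up to $T^\vepsilon$).

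Rather than Poisson-summing in $n$ and $m$ to manufacture further cancellation as in Young, I plan to bound this Kloosterman bilinear form directly by a combination of Cauchy--Schwarz, the Rankin--Selberg second moment of $A_\phi$, and the Deshouillers--Iwaniec large sieve for Kloosterman sums, the last of which furnishes the needed cancellation in the $c$-sum. The principal obstacle I anticipate is the transition regime $\sqrt{mn}/c\asymp 2t_j$, where $J_{2it}(x)$ displays Airy-type behaviour and the oscillatory size of $\widetilde{H}$ is only $M/\sqrt{T}$: any extra $T^\delta$ slack here would overwhelm the target $MT^{1+\vepsilon}$ in the critical case $M\asymp\sqrt{T}$. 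Securing a sufficiently precise uniform asymptotic for $J_{2it}(x)$ in the transition range and cleanly executing the stationary-phase coupling with the phase $(m/n)^{it}$ will therefore be the technical heart of the proof.
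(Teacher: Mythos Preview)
Your setup through Kuznetsov is fine, but the plan to bound the Kloosterman bilinear form ``directly by a combination of Cauchy--Schwarz, the Rankin--Selberg second moment of $A_\phi$, and the Deshouillers--Iwaniec large sieve for Kloosterman sums'' has a genuine gap: any argument that treats $A_\phi(\ast,n)$ as an arbitrary sequence subject only to $\sum_{n\leqslant N}|A_\phi(\ast,n)|^2\Lt N$ cannot beat the generic spectral large sieve. Concretely, the paper shows (Corollary~\ref{cor: large sieve}, derived exactly by applying a hybrid large sieve to the Kloosterman term with no $\GL_3$ input) that
\[
\sum_{T<t_j\leqslant T+M}\omega_j\,\bigg|\sum_{n\leqslant N}a_n\lambda_j(n)n^{it_j}\bigg|^2 \Lt M(T+N)(TN)^{\vepsilon}\sum_{n\leqslant N}|a_n|^2.
\]
Feeding in the AFE length $N\asymp T^{3/2}$ and $a_n=A_\phi(\ast,n)/\sqrt n$ gives only $MT^{3/2+\vepsilon}$, a factor $T^{1/2}$ away from the target. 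The Deshouillers--Iwaniec large sieve for Kloosterman sums is, via Kuznetsov, essentially the same inequality read backwards and does not improve this; nor does any refinement of the Bessel asymptotics, since the loss is not in the kernel size but in the fact that you have thrown away the automorphic structure of $A_\phi$.

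What the paper actually does is keep that structure: after the Bessel analysis and the decomposition $S(m,n;c)e((m+n)/c)=\sum_{qr=c}V_q(m,n;r)$ (which separates $m$ and $n$), one arrives at sums of the shape
\[
\sum_c\frac{1}{c}\,\sumx_{\valpha\ (\mathrm{mod}\ c)}\bigg|\sum_{n\sim N}A_\phi(\ast,n)\,e\Big(\frac{\widebar\valpha n}{c}\Big)e\Big(\frac{nt}{cq}\Big)\bigg|^2,
\]
and it is precisely the $\GL_3$ Vorono\"i summation applied to the inner $n$-sum that shortens it from length $T^{3/2}$ to roughly $T^{3/2}\cdot(1/M)^3/q^3$, after which Young's hybrid large sieve (Lemma~\ref{lem: Young's LS}) closes the argument. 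There is no Poisson step here --- the paper's point is exactly that one can bypass Young's Poisson detection of Eisenstein--Kloosterman cancellation --- but Vorono\"i on the $\GL_3$ coefficients is not optional. Your proposal would need to reinstate some device (Vorono\"i, or an equivalent functional-equation trick for $\phi$) that genuinely uses $\phi$ being an automorphic form; otherwise the critical case $M=\sqrt T$ is out of reach by $T^{1/2}$.
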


\delete{The techniques underlying the proof of Theorem \ref{thm G:(3)} can be modified to yield
\begin{align}
	\sum_{T < t_j \leqslant T + M}   \left|L (s_j,      u_j) \right|^6 \Lt_{\phi, \vepsilon}   M T^{1 +\vepsilon} ,
\end{align} 
for $ \sqrt{T} \leqslant M \leqslant T $. See \cite[\S 10]{Young-GL(3)-Special-Points}. }

Deshouillers, Iwaniec, and Luo \cite{DI-Nonvanishing,Luo-Twisted-LS} established the following large sieve inequalities for the special twisted Hecke eigenvalues $ \lambda_{j} (n) n^{it_j}  $: 
\begin{align}\label{1eq: DI's bound}
	\sum_{t_j \leqslant T} \omega_j  \bigg| \sum_{ n \leqslant N}  a_{n} \lambda_{j} (n) n^{it_j} \bigg|^2 \Lt \big(T^2 +  N^{2}\big) (TN)^{\vepsilon} \sum_{ n \leqslant N}  |a_{n}|^2 ,  
\end{align}
\begin{align}\label{1eq: Luo's bound, 1}
	\sum_{t_j \leqslant T} \omega_j   \bigg| \sum_{ n \leqslant N}  a_{n} \lambda_{j} (n) n^{it_j} \bigg|^2 \hskip -2pt \Lt \hskip -1pt  \big(T^2 \hskip -1pt    +  \hskip -1pt    T^{3/2} N^{1/2} \hskip -1pt   + \hskip -1pt   N^{5/4} \big) (TN)^{\vepsilon} \hskip -2pt \sum_{ n \leqslant N} \hskip -2pt |a_{n}|^2,
\end{align}
for any complex $a_n$. 
Note that \eqref{1eq: Luo's bound, 1} improves \eqref{1eq: DI's bound} for $N > T$. 

The proof of \eqref{1eq: mean Lindelof, T} by Young  \cite{Young-GL(3)-Special-Points} (also that by Chandee--Li  \cite{Chandee-Li-GL(4)-Special-Points}) utilizes a refinement of Luo's large sieve \eqref{1eq: Luo's bound, 1} in asymptotic form,   which is conducive to further analysis using the   Vorono\"i summation for the Fourier coefficients of $\phi$. More precisely, Young adapted an idea from Iwaniec and Li \cite{Iwaniec-Li-Ortho}  and applied the Poisson summation formula instead of the Euler--Maclaurin formula by Luo \cite{Luo-Twisted-LS}.

In the spirit of Young, we establish an asymptotic large sieve in the next technical theorem so that his hybrid large sieve (Lemma \ref{lem: Young's LS}) yields Theorem \ref{thm G:(3)} after the application of the Vorono\"i summation formula for $\SL_3 (\BZ)$. 

For
\begin{equation}\label{1eq: weight k} 
	h ( t  ) = \exp \bigg( \hspace{-1pt}  -  \frac {(t - T)^2} {M^2}    \bigg) + \exp \bigg(  \hspace{-1pt} -  \frac {(t + T)^2} {M^2}    \bigg), 
\end{equation}  
define 
\begin{align}  \label{1eq: S, cusp}	
	S (\mathcal{A})   & =	   \sum_{j = 1 }^{\infty}   \omega_j { h  ( t_j ) }   \bigg| \sum_{ N < n \leqslant 2 N }  a_{n} \lambda_{j} (n) n^{i t_j}  \bigg|^2   ,  \\
	\label{1eq: E, Eis}		 T (\mathcal{A})  &  =	   \frac 1 {\pi} 	\int_{-\infty}^{\infty}   \frac{h (t)}    {|\zeta(1+2it)|^2} \bigg| \sum_{ N < n \leqslant 2 N }   {a}_{n} \sigma_{  2 i t} (n)   \bigg|^2  \nd t ,
\end{align} 
where  $\mathcal{A} $ stands for the sequence $\{a_n\}$ and $\sigma_{\vnu} (n)$ is the divisor function 
$$
	\sigma_{\vnu} (n) = \sum_{d | n} d^{\hspace{0.5pt} \vnu} .  
$$ 

\begin{thm}
	\label{thm: asymptotic large sieve}
	
	Let $ T^{\vepsilon} \leqslant M \leqslant T^{1-\vepsilon}$. Assume $\mathcal{A}$ is real-valued. Then  we have
	\begin{align}
		S (\mathcal{A})  + T (\mathcal{A})  = D (\mathcal{A}) + P (\mathcal{A})  ,  
	\end{align}
	with 
	\begin{align}
		D (\mathcal{A})  = \bigg(\frac {2} {\pi \sqrt{\pi}} M T + O_{\vepsilon, A} \bigg(\frac {N^{3/2+\vepsilon}} {T^A} \bigg) \bigg) \sum_{ N < n \leqslant 2 N } |a_{n}|^2,  
	\end{align}
for any $A \geqslant 0$, and 
	\begin{align}\label{1eq: bound P(a)}
		P  (\mathcal{A}) \Lt  M T \sum_{q \Lt N/T}  \frac 1 {q} \int_{-M^{\vepsilon}/ M}^{M^{\vepsilon}/ M} \sum_{c \Lt N/ T q} \frac 1 {c}  \, \sumx_{   \valpha      (\mathrm{mod} \, c) } \bigg|   \sum_{ N < n \leqslant 2 N }  a_{n}   e \Big(   \frac { {\valpha}     n} {c} \Big)  e \bigg( \frac {n t} {c q} \bigg) \bigg|^2  \nd t .
	\end{align}  
\end{thm}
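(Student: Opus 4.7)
The plan is to apply the Kuznetsov trace formula with a twisted test function, and then to dissect the geometric side by stationary phase.

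Opening the squares and exploiting that $\mathcal{A}$ is real-valued, both $S(\mathcal{A})$ and $T(\mathcal{A})$ assume the common shape of a spectral average of $\lambda_j(m)\lambda_j(n)$ (and its Eisenstein analogue $\eta_t(m)\eta_t(n)$, where $\eta_t(n) = n^{-it}\sigma_{2it}(n) = \sum_{ab = n}(a/b)^{it}$) against the effective even weight
\[ \tilde h_{m,n}(t) = h(t)\cos\bigl(t\log(n/m)\bigr), \]
the cosine arising from symmetrizing $(n/m)^{it}$ under the swap $m \leftrightarrow n$. Applying Kuznetsov's formula to this weight rewrites $S(\mathcal{A}) + T(\mathcal{A})$ as a diagonal contribution $\delta_{m=n}\cdot\frac{2}{\pi^2}\int h(t)\,t\tanh(\pi t)\,dt$ plus a geometric side
\[ \sum_{\pm}\sum_{m,n} a_m a_n \sum_c \frac{S(m,\pm n;c)}{c}\,\mathcal{H}^\pm_{m,n}\!\left(\frac{4\pi\sqrt{mn}}{c}\right), \]
with $\mathcal{H}^\pm_{m,n}$ the $J$- and $K$-Bessel transforms of $\tilde h_{m,n}$. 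On the diagonal $\tilde h_{n,n} = h$, so the Gaussian integral evaluates to $\frac{2}{\pi\sqrt{\pi}} MT\bigl(1 + O_A(T^{-A})\bigr)$ and furnishes the main term of $D(\mathcal{A})$.

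I would then analyze $\mathcal{H}^\pm_{m,n}$ by stationary phase in $t$. Using integral representations for $J_{2it}(x)$ and $K_{2it}(x)$, the total phase combines the Bessel oscillation with $\cos(t\log(n/m))$; the Gaussian localization of $h$ at $t \approx \pm T$ of width $M$ forces the stationary-phase condition $4\pi\sqrt{mn}/c \asymp 2T$, and hence $c \Lt N/T$. To leading order
\[ \mathcal{H}^+_{m,n}\!\left(\tfrac{4\pi\sqrt{mn}}{c}\right) \approx \frac{M}{\sqrt{T}}\,e\!\left(\pm\tfrac{2\sqrt{mn}}{c}\right)\varpi(m,n,c) \]
for a smooth bump $\varpi$, while $\mathcal{H}^-_{m,n}$ is negligible since $K_{2it}(x)$ has no genuine stationary point at these scales. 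The tail $c \Gt (N/T)T^{\vepsilon}$ contributes to the $N^{3/2+\vepsilon}/T^A$ error absorbed into $D(\mathcal A)$ by Weil's bound.

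Finally, opening $S(m, n; c) = \sumx_{\valpha\,(c)} e((\valpha m + \bar{\valpha} n)/c)$, the residual phase is $e((\valpha m + \bar{\valpha} n + 2\sqrt{mn})/c)$. Applying the reciprocity $\bar{\valpha}/c \equiv -\bar c/\valpha \pmod 1$ and partitioning $c$ dyadically by the integer $q \approx c/\valpha$ introduces the auxiliary parameter $q \Lt N/T$; within the stationary-phase range the phase $2\sqrt{mn}/c - \bar c n/\valpha$ Taylor-expands, up to errors dominated by $M^\vepsilon/M$, to $nt/(cq)$ for a small real parameter $|t| \Lt M^{\vepsilon}/M$. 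The $m$-dependence takes the symmetric form $e(\valpha m/c)e(mt/(cq))$, so the trivial inequality $|a_m a_n| \leqslant \tfrac{1}{2}(a_m^2 + a_n^2)$ diagonalizes the double sum into the $\bigl|\sum_n a_n\,e(\valpha n/c)\,e(nt/(cq))\bigr|^2$ shape of $P(\mathcal{A})$. The main obstacle is the stationary-phase analysis for $\mathcal{H}^\pm_{m,n}$, which must remain uniform as $|\log(n/m)| \to 0$ where the critical point degenerates (requiring a separate Taylor expansion near the $m \approx n$ ridge), and the careful bookkeeping of the reciprocity/dyadic-partition step that produces the parameter $q$ and the compact $t$-integration in $P(\mathcal{A})$.
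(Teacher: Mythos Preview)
Your opening (Kuznetsov with the twisted weight, evaluation of the diagonal) matches the paper, but the treatment of the Bessel transform and the introduction of $q$ diverge in ways that leave real gaps. The paper does \emph{not} collapse the Bessel transform to a single stationary-phase term of size $M/\sqrt{T}$. Instead it evaluates the $t$-integral exactly (a Gaussian Fourier transform) and keeps a residual integral over a variable $r$ of length $M^{\vepsilon}/M$ (Lemma~\ref{lem: x>1}):
\[
H\bigg(\frac{4\pi\sqrt{mn}}{c},\sqrt{\tfrac{m}{n}}\bigg)=\mathrm{Re}\bigg\{e\Big(\frac{m+n}{c}\Big)\cdot MT\int_{-M^{\vepsilon}/M}^{M^{\vepsilon}/M} g(r)\,e\Big(\frac{\pi m}{c}\uprho_{+}(r)-\frac{\pi n}{c}\uprho_{-}(r)\Big)\,\nd r\bigg\}+O(T^{-A}).
\]
This $r$-integral is precisely what becomes the $t$-integral in \eqref{1eq: bound P(a)} after the change $t=\pm\uprho_{\pm}(r)$; by evaluating it away you lose the very parameter you later need. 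Note also that the leading phase is $e((m+n)/c)$, not $e(2\sqrt{mn}/c)$: the twist $\cos(t\log(n/m))$ shifts the effective Bessel argument from $x=4\pi\sqrt{mn}/c$ to $x\cosh(\log\sqrt{m/n})=2\pi(m+n)/c$. Your phase is what one gets by ignoring the twist.

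This correct phase is essential, because the paper's mechanism for producing $q$ is not reciprocity at all but the arithmetic identity (Lemma~\ref{lem: S = V}, due to Luo)
\[
S(m,n;c)\,e\Big(\frac{m+n}{c}\Big)=\sum_{qr=c}V_q(m,n;r),\qquad V_q(m,n;r)=\mathop{\sum_{\valpha\,(\mathrm{mod}\,r)}}_{(\valpha(q-\valpha),r)=1}e\Big(\frac{\widebar{\valpha}\,m+\overline{q-\valpha}\,n}{r}\Big),
\]
which simultaneously introduces $q$ as a divisor of $c$ and separates $m$ from $n$ inside the exponential. Your ``$q\approx c/\valpha$'' does not make sense as written ($\valpha$ is a residue class, not a size), and the Taylor expansion you sketch provides no legitimate source for an integration variable $t$. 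Once the identity is in hand the rest is routine: insert the $r$-integral, apply $|ab|\leqslant\tfrac12(|a|^2+|b|^2)$ to the $V_q$-sum to diagonalize, drop the extra coprimality condition by positivity, and change variables $r\to t$. The two missing ingredients in your plan are thus: keep the Bessel transform as an $r$-integral with the phase $e((m+n)/c)$, and replace the reciprocity step by Lemma~\ref{lem: S = V}.
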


Our proof of Theorem \ref{thm: asymptotic large sieve} relies on careful analysis of the related Bessel integral from the Kuznetsov formula for $\SL_2 (\BZ)$, in particular the expression in Lemma \ref{lem: x>1}.  

The readers may compare Theorem \ref{thm: asymptotic large sieve} with Young's Theorem 7.1 in \cite{Young-GL(3)-Special-Points} and as well jump to \S \ref{sec: sketch} for a sketch of proof of Theorem \ref{thm G:(3)}.

\begin{remark}\label{rem: real assump}
Note that the square in {\rm\eqref{1eq: S, cusp}} is opened up into the double sum
$$  \mathop{\sum\sum}_{N < m, n \leqslant 2 N} a_m \overline{a}_{n} \lambda_{j} (m)\lambda_j (n) (m/n)^{it_j}  ,  $$ and,  in order to apply the Kuznetsov trace formula, this needs to be even in $t_j$, so the sequence $\mathcal{A} = \{a_n\}$ is forced to be real as assumed in Theorem {\rm\ref{thm: asymptotic large sieve}}.  However, in practice the sequence $\mathcal{A}$ is often {not} real-valued. Nevertheless, if one is concerned with bounds for $ S (\mathcal{A}) $ or $ S (\mathcal{A}) + T (\mathcal{A})  $, then one may always remove this assumption by  splitting $a_n$ into $\mathrm{Re}(a_n)$ and $\mathrm{Im}(a_n)$ at first, while splitting $\mathrm{Re}(a_n)$ or $\mathrm{Im}(a_n)$ into $a_n$ and $\overline{a}_n$ at the end. By the Cauchy inequality, one is reduced to estimating 
	\begin{align}
		\breve{D} (\mathcal{A})  =  M T \sum_{ N < n \leqslant 2 N } |a_{n}|^2, 
	\end{align}
\begin{align}\label{1eq: bound P(a), 2}
	\breve{P}  (\mathcal{A}) = M T \sum_{q \Lt N/T}  \frac 1 {q} \int_{-M^{\vepsilon}/ M}^{M^{\vepsilon}/ M} \sum_{c \Lt N/ T q} \frac 1 {c}  \, \sumx_{   \valpha      (\mathrm{mod} \, c) } \bigg|   \sum_{ N < n \leqslant 2 N }  a_{n}   e \Big(   \frac { {\valpha}     n} {c} \Big)  e \bigg( \frac {n t} {c q} \bigg) \bigg|^2  \nd t ,
\end{align} with the observation that both are invariant under $ a_n \ra \overline{a}_n$. 
\end{remark}




\delete{It follows that
\begin{align}\label{1eq: moments, Luo M=1}
	\sum_{T < t_j \leqslant T+1}  \left|L (s_j, \phi  \times u_j) \right|^2 \Lt  T^{1+\vepsilon},  
\end{align}  
and hence, in general, we have the short-interval extension of \eqref{1eq: mean Lindelof, T}: 
\begin{align}\label{1eq: moments, Luo M}
	\sum_{T < t_j \leqslant T+M}  \left|L (s_j, \phi  \times u_j) \right|^2 \Lt  M T^{1+\vepsilon},
\end{align} 
for any $1 \leqslant M \leqslant T$, if $\phi$ is a cusp form for $\GL_2$. }
	



\subsection{Comparison with Luo and Young's Approach} 

It is a remarkable observation of Luo \cite{Luo-Twisted-LS} that in the long-interval case $t_j \leqslant T$ there is an `Eisenstein--Kloosterman' cancellation that enabled him to improve Deshouillers and Iwaniec's \eqref{1eq: DI's bound}.  Presumably, one should expect such an effect to persist in the short-interval case of $T < t_j \leqslant T+M$. However, detecting the effect is a very subtle problem since in the shortest case $M = 1$   it would disappear as observed by  Luo  \cite{Luo-LS} (see \eqref{1eq: LS, M=1} and  \eqref{1eq: LS, M=1, twisted} below).  

As alluded to above, Luo \cite{Luo-Twisted-LS} detected the `Eisenstein--Kloosterman'  cancellation by the Euler--Maclaurin formula, while Young \cite{Young-GL(3)-Special-Points} did it by the Poisson summation formula, and then he continued with the $\GL_3$ Vorono\"i summation formula for further cancellation (actually, the dual sum is negligibly small). Young also noticed a curious similarity between this
problem and certain aspects of the large sieve inequality for $\Gamma_1 (q) \subset \mathrm{SL}_2 (\BZ)$ obtained in Iwaniec--Li \cite{Iwaniec-Li-Ortho}. 

Our approach differs in that we do not detect the  `Eisenstein--Kloosterman' cancellation between $ T (\mathcal{A})  $ and $P (\mathcal{A})$ as in \eqref{1eq: E, Eis} and \eqref{1eq: bound P(a)} (or \eqref{4eq: P(a), 2}) and apply directly the Vorono\"i summation to \eqref{1eq: bound P(a)}.  
A simple reason is that the Eisenstein contribution is already $O (  T^{3/2+\vepsilon})$: 
\begin{align}\label{1eq: mean Lindelof, GL(3), 2}
|L (1/2 , \phi) |^2 \cdot	\int_{\, T-M}^{T+M}    |L (1/2+2it , \phi)   |^2  \nd t \Lt_{\phi, \vepsilon}     T^{3/2 +\vepsilon},
\end{align} 
for all $1 \leqslant M \leqslant T$, 
as a result of the large sieve for Dirichlet polynomials \cite[Theorem 6.1]{Montgomery-Topics} (with $N $ up to $ T^{3/2+\vepsilon}$),  
\begin{align*}
	\int_{-T}^T \bigg|\sum_{ n \leqslant   N} a_n n^{it} \bigg|^2 \nd t \Lt (T+N) \sum_{ n \leqslant   N}  |a_n|^2, 
\end{align*} 
so its part canceled out should not exceed $ T^{3/2+\vepsilon} $ any way.



\delete{\footnote{ The bound {\rm\eqref{1eq: DI's bound}} was (7) in \cite{Luo-Twisted-LS} and cited as Theorem 6 of \cite{DI-Nonvanishing}. However, in the latter the sequence ${a}_{n}$ is {\it not} arbitrary but subject to certain conditions ($\mathrm{H}_1$),  ($\mathrm{H}_2$), and  ($\mathrm{H}_3$). Also note that {\rm\eqref{1eq: DI's bound}} has been used in Luo's proof of  {\rm\eqref{1eq: Luo's bound, 1}}  for the case $N \leqslant T$. 
		
		Nevertheless, if we choose $M = T$ in {\rm\eqref{1eq: large sieve}} and apply a dyadic summation, then  \eqref{1eq: DI's bound} may be improved slightly into
		\begin{align*}
			\sum_{t_j \leqslant T} \omega_j  \bigg| \sum_{ n \leqslant N}  a_{n} \lambda_{j} (n) n^{it_j} \bigg|^2 \Lt \big(T^2 +  T N \big) (TN)^{\vepsilon} \sum_{ n \leqslant N}  |a_{n}|^2. 
		\end{align*}
		At any rate, this put Luo's {\rm\eqref{1eq: Luo's bound, 1}} on solid ground in the case $N \leqslant T$. }}


\subsection{Remarks} 

The study of $L (s_j, \phi \times u_j)$   is of particular interest  if $\phi $ is a certain $\GL_2$ holomorphic cusp form, since the non-vanishing of these special $L$-values arises in the Phillips--Sarnak deformation theory of cusp forms \cite{Phillips-Sarnak}. See  \cite{Phillips-Sarnak,DI-Nonvanishing,DIPS-Maass,Luo-Non-Vanishing,Luo-Weyl,Luo-2nd-Moment}. 

This paper is an attempt to approach the subconvexity problem for  $L (s_j , \phi \times u_j)$ for $\phi$ a fixed Hecke--Maass form for $\GL_3$---note that its convexity bound   is attainable if we drop all but one term in \eqref{1eq: mean Lindelof, GL(3)}.  This problem is notoriously hard due to the `conductor drop': its $\gamma$-factor is of $\GL_3$  type of conductor $|s_j|^3$ but its Fourier coefficients are in the $ \GL_3 \times \GL_2$  Rankin--Selberg type. Nevertheless,  if $\phi$ were $\GL_2 $, the subconvexity for $L (s_j , \phi \times u_j)$ was achieved (as a special case) in the seminal work of Michel and Venkatesh \cite{Michel-Venkatesh-GL2}. 	

A subconvexity bound for $L (s_j, \phi  \times u_j)$ would be achieved once we could prove \eqref{1eq: mean Lindelof, GL(3)} for $M = T^{1/2-\delta}$ with some $\delta > 0$.  
However, it seems hard even to break the bound $ T^{3/2+\vepsilon} $ as in \eqref{1eq: mean Lindelof, GL(3), 2} for  $M = T^{1/2-\delta}$, although this was done for $ T^{1/2+\delta} \leqslant M \leqslant T^{57/70-\delta}$ in the recent work of Aggarwal,  Leung, and Munshi \cite{Munshi-A-L-GL(3)}: 
\begin{align*}
	\int_{\, T-M}^{T+M}    |L (1/2+it , \phi)   |^2  \nd t \Lt_{\phi, \vepsilon}     T^{ \vepsilon} \bigg(\frac {T^{9/4}} {M^{3/2}} + \frac {M^3} {T^{21/20}} + M^{7/4} T^{3/40} + M^{15/14} T^{15/28} \bigg). 
\end{align*}
Thus it is still important to analyze carefully the `Eisenstein--Kloosterman' cancellation in the short-interval case to see whether it is significant enough to break $T^{3/2+\vepsilon}$.

\subsection{Aside: Luo's Large Sieve on Short Intervals}

It was stated without proof by
Iwaniec \cite{Iwaniec-Spectral-Weyl} and proven independently by Luo \cite{Luo-LS} and Jutila \cite{Jutila-LS} that 
\begin{align}\label{1eq: LS, M=1}
	\sum_{T < t_j \leqslant T+1} \omega_j  \bigg| \sum_{ n \leqslant N}  a_{n} \lambda_{j} (n)  \bigg|^2 \Lt_{\vepsilon}  (T +  N  ) (TN)^{\vepsilon} \sum_{ n \leqslant N}  |a_{n}|^2 ,
\end{align}
while Luo observed that, by partial summation, \eqref{1eq: LS, M=1} is equivalent to its twisted variant:
\begin{align}\label{1eq: LS, M=1, twisted}
	\sum_{T < t_j \leqslant T+1} \omega_j  \bigg| \sum_{ n \leqslant N}  a_{n} \lambda_{j} (n) n^{it_j}  \bigg|^2 \Lt_{\vepsilon}  (T +  N  ) (TN)^{\vepsilon} \sum_{ n \leqslant N}  |a_{n}|^2 ;  
\end{align}
the twist  $n^{it_j}$ does not play a role because $t_j$ is restricted in a segment of unity length.

By a direct application of Young's large sieve in Lemma \ref{lem: Young's LS}  to the expression in \eqref{1eq: bound P(a)} in Theorem \ref{thm: asymptotic large sieve}, we recover   \eqref{1eq: LS, M=1, twisted} and hence provide the third   proof of  \eqref{1eq: LS, M=1}. 

\begin{cor}\label{cor: large sieve} 
	Let   $1 \leqslant M \leqslant T$. We have
	\begin{align}\label{1eq: large sieve}
		\sum_{ T < t_j \leqslant T+M }  \omega_j  \bigg| \sum_{     n \leqslant    { N} }  a_{n} \lambda_{j} (n) n^{it_j}  \bigg|^2  \Lt_{\vepsilon}  	M (  T+ N   ) (TN)^{\vepsilon}    \sum_{   n  \leqslant    {N} } |a_{n}|^2 , 
	\end{align}
	for any complex numbers $a_n$, where the implied constant depends on $\vepsilon$ only. 
\end{cor}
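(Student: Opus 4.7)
The plan is to derive \eqref{1eq: large sieve} directly from Theorem \ref{thm: asymptotic large sieve} by applying Young's hybrid large sieve (Lemma \ref{lem: Young's LS}) to the off-diagonal term $\breve{P}(\mathcal{A})$.

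First I would carry out three routine reductions. By Remark \ref{rem: real assump}, splitting $a_n$ into real/imaginary and then into $a_n,\overline{a}_n$ pairs, it suffices to prove $S(\mathcal{A})\Lt \breve{D}(\mathcal{A})+\breve{P}(\mathcal{A})$ for a general complex sequence. Next, I would majorise the sharp cut-off $T<t_j\leqslant T+M$ by the Gaussian weight $h(t_j)$ of \eqref{1eq: weight k} centred at $T+M/2$, for which $h(t_j)\gg 1$ throughout the interval. Finally, I would decompose $n\leqslant N$ into $O(\log N)$ dyadic pieces $N_0<n\leqslant 2N_0$, with $N_0\leqslant N$, losing only a logarithm that is absorbed into $(TN)^\vepsilon$. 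One may moreover assume $T^\vepsilon\leqslant M\leqslant T^{1-\vepsilon}$ as required by Theorem \ref{thm: asymptotic large sieve}: for $M<T^\vepsilon$ replace $M$ by $T^\vepsilon$ (the extra $T^\vepsilon$ being absorbed into $(TN)^\vepsilon$), and the case $M>T^{1-\vepsilon}$ is covered directly by Luo's inequality \eqref{1eq: Luo's bound, 1} after dropping the restriction $t_j>T$.

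With these reductions, the diagonal contribution $\breve{D}(\mathcal{A})=MT\sum|a_n|^2$ accounts for the $MT\sum|a_n|^2$ term in \eqref{1eq: large sieve}. For the off-diagonal, I would observe that the inner sum in \eqref{1eq: bound P(a), 2} is a linear form in the $a_n$ evaluated at the shifted Farey fraction $\alpha/c+t/(cq)$. For each fixed $q\Lt N/T$, the points with $(\alpha,c)=1$, $c\Lt N_0/Tq$, form an $\asymp(Tq/N_0)^2$-spaced set, and the integration over $|t|\leqslant M^\vepsilon/M$ smears each one into a short interval---this is precisely the configuration that Young's hybrid large sieve (Lemma \ref{lem: Young's LS}) is designed to handle. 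A direct application of the lemma, together with summation over $q\Lt N/T$ (contributing another log factor) and multiplication by the overall prefactor $MT$ of \eqref{1eq: bound P(a), 2}, should yield $\breve{P}(\mathcal{A})\Lt MN(TN)^\vepsilon\sum|a_n|^2$. Adding this to the diagonal delivers \eqref{1eq: large sieve}.

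The main obstacle will be to match \eqref{1eq: bound P(a), 2} precisely to the hypothesis of Lemma \ref{lem: Young's LS}, so that the hybrid large sieve applied with the $t$-integration of length $2M^\vepsilon/M$ compensates exactly for the $MT$ prefactor and produces an $MN$ bound rather than an $N^2/T$ one; once this matching is verified, the dyadic and $q$-summations constitute only routine bookkeeping.
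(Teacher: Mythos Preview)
Your proposal is correct and follows essentially the paper's approach: apply Lemma~\ref{lem: Young's LS} with $\gamma=1$, $v=q$, $\tau=M^{\vepsilon}/M$, $C\Lt N/(Tq)$ to each $q$-summand of $\breve{P}(\mathcal{A})$, obtaining $\breve{P}(\mathcal{A})\Lt MN(TN)^{\vepsilon}\|\mathcal{A}\|^2$, and combine with $\breve{D}(\mathcal{A})$. Two small points to tighten: for $M>T^{1-\vepsilon}$ the paper subdivides $(T,T+M]$ into shorter intervals rather than citing Luo's \eqref{1eq: Luo's bound, 1} (your appeal to Luo leaves an uncontrolled $N^{5/4}$ when $N>T^4$), and the residual $O(N^{3/2+\vepsilon}/T^A)\|\mathcal{A}\|^2$ from $D(\mathcal{A})$ in Theorem~\ref{thm: asymptotic large sieve} is absorbed by enlarging $T\to T+N^{\vepsilon}$ and choosing $A=3/(2\vepsilon)$.
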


Actually, \eqref{1eq: large sieve} is equivalent to \eqref{1eq: LS, M=1, twisted} via dividing $(T, T+M]$ into $ M $ many intervals of unity length.

\subsection*{Notation}

By $X \Lt Y$ or $X = O (Y)$ we mean that $|X| \leqslant c Y$  for some constant $c  > 0$, and by $X \asymp Y$ we mean that $X \Lt Y$ and $Y \Lt X$. We write $X \Lt_{\valpha, \beta, ...} Y $ or $  X = O_{\valpha, \beta, ...} (Y) $ if the implied constant $c$ depends on $\valpha$, $\beta$, ....  

The notation $x \sim X$ stands for  $ X <  x \leqslant 2 X $ for $x$ integral or real according to the context.  

By `negligibly small' we mean $ O_A ( T^{-A} )$ for arbitrarily large but fixed $A > 0$. 

Throughout the paper,  $\vepsilon  $ is arbitrarily small and its value  may differ from one occurrence to another.

\delete{Moreover, Luo \cite{Luo-2nd-Moment}  proved the following asymptotic formula:
\begin{align}
	\sum_{j=1}^{\infty} \omega_j \left|L (s_j, \phi_2  \times u_j) \right|^2 \exp (-t_j / T) = c_0 T^2 \log T + c_1 T + O_{\phi_2, \vepsilon} (T^{11/6+\vepsilon}),  
\end{align}
where $c_0$ and $c_1$ are constants depending on $\phi_2$ only.}

\begin{acknowledgement}
	The author wishes to thank Wenzhi Luo, Matthew P. Young, and the referee for helpful comments and suggestions.  
\end{acknowledgement}

	\section{Preliminaries}
	
	\subsection{Exponential Sums}

Let $e (x) = \exp (2\pi i x)$. 	For integers $  m,  n , q$ and   $c  \geqslant 1$,      define 
	\begin{align}\label{1eq: defn Kloosterman}
		S   (m, n ; c ) = \sumx_{   \valpha      (\mathrm{mod} \, c) } e \bigg(   \frac {  \valpha     m +   \widebar{\valpha    } n} {c} \bigg) ,
	\end{align} 
	\begin{align} \label{2eq: defn V}
		V_{q} (m, n; c) =	\mathop{\sum_{\valpha     (\mathrm{mod}\, c)}}_{ (\valpha     (q-\valpha    ), c) = 1 }  e \bigg(   \frac {  \widebar{\valpha    } m +  \overline{q - \valpha    } n } {c} \bigg),
	\end{align}
	where the $\star$ indicates the condition $(\valpha    , c) = 1$ and $ \widebar{\valpha    }$ is given by $\valpha     \widebar{\valpha    }  \equiv 1 (\mathrm{mod} \, c)$. The definition of $V_{q} (m, n; c)$ is essentially from Iwaniec--Li  \cite[(2.17)]{Iwaniec-Li-Ortho}. Note that the Kloosterman sum $S (m,n;c)$ is real valued. Moreover, we have the Weil  bound:
	\begin{align}
			\label{2eq: Weil}
		S (m, n; c)   \Lt \tau (c) \sqrt{ (m, n, c) } \sqrt{c}, 
	\end{align} 
where as usual $\tau (c)$ is the number of divisors of $c$. 
It follows that 
\begin{align}
	\label{2eq: quad form, Kloosterman}  	   {\mathop{\sum \sum}_{  m, n  \leqslant    {N} } } \left| a_{m}  \overline{a }_{n}   S (m, n; c) \right|   \Lt \tau^2 (c) \sqrt{c} N  \sum_{   n \leqslant    {N}  } |a_{n}|^2, 
\end{align} 
	for any complex $a_n$. 
	
	\begin{lem}\label{lem: S = V}
		 We have 
		 \begin{align}\label{2eq: S = V}
		 	 S (m, n; c)  e\Big(\frac {m+n} {c} \Big) = \sum_{qr = c } V_q (m, n; r). 
		 \end{align}
	\end{lem}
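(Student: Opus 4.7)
The plan is to open up the left side of \eqref{2eq: S = V} using the definition \eqref{1eq: defn Kloosterman} and partition the resulting sum by the divisor $q := (1+\valpha, c)$. Writing
\begin{align*}
S(m,n;c)\, e\!\left(\frac{m+n}{c}\right) = \sumx_{\valpha \,(\mathrm{mod}\, c)} e\!\left(\frac{(\valpha+1)m + (\widebar{\valpha}+1)n}{c}\right),
\end{align*}
the key observation is that $\widebar{\valpha}+1 \equiv \widebar{\valpha}(\valpha+1) \pmod{c}$ since $\valpha \widebar{\valpha} \equiv 1 \pmod{c}$, so $\valpha+1$ appears as a common factor in both numerators. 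This is what makes a $q$-decomposition natural.

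I would then set $r = c/q$ and parametrize $\valpha \equiv q\valpha' - 1 \pmod c$ with $\valpha'$ running over residues modulo $r$. The condition $(1+\valpha, c) = q$ is equivalent to $(\valpha', r) = 1$, while the unit condition $(\valpha, c) = 1$ reduces to $(q\valpha' - 1, r) = 1$; the constraint modulo $q$ is automatic since $\valpha \equiv -1 \pmod q$. After substitution, $(\valpha+1)m / c = \valpha' m / r$ and $(\widebar{\valpha}+1)n / c = \widebar{\valpha}\, \valpha' n/r$, where $\widebar{\valpha} \equiv \overline{q\valpha' - 1} \pmod r$.

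The decisive step is the bijective change of variable $\beta \equiv \overline{\valpha'} \pmod r$ on $(\BZ/r\BZ)^\times$. The elementary identity $q\widebar{\beta} - 1 \equiv (q-\beta)\widebar{\beta} \pmod r$ (valid because $\beta \widebar{\beta} \equiv 1$) yields $\overline{q\valpha'-1}\cdot \valpha' \equiv \overline{q - \beta} \pmod r$, and the two coprimality conditions collapse into $(\beta(q-\beta), r) = 1$. The $q$-indexed inner sum then matches $V_q(m,n;r)$ exactly as in \eqref{2eq: defn V}, and summing over $q \mid c$ gives \eqref{2eq: S = V}. I do not anticipate any serious obstacle; the only point that needs to be spotted is the factorization $q\widebar{\beta} - 1 = (q-\beta)\widebar{\beta}$, which is what collapses the unfriendly inverse $\overline{q\valpha' - 1}$ into the target form $\overline{q-\beta}\cdot \beta$ demanded by the definition of $V_q$.
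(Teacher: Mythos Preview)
Your proof is correct and follows essentially the same route as the paper's (which records Luo's argument). The only cosmetic difference is that the paper first replaces $\valpha$ by $-\valpha$ so as to split according to $q=(1-\valpha,c)$ and then parametrizes directly by $\valpha = 1 - \widebar{\beta}\,q$, thereby absorbing your change of variable $\beta \equiv \overline{\valpha'}$ into the parametrization; the underlying identity $(q-\beta)\widebar{\beta} \equiv q\widebar{\beta}-1$ is the same in both arguments.
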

	
\begin{proof}
For the reader's convenience, we record here Luo's proof  as in \cite[\S 3]{Luo-LS}\footnote{Note that there is a typo in  \cite{Luo-LS}:  the summation  is missed.}.  By \eqref{1eq: defn Kloosterman} we write 
	\begin{align*}
		S (m, n; c)  e\Big( \frac {m+n} {c} \Big) = \sumx_{   \valpha      (\mathrm{mod} \, c) } e \bigg(   \frac {  (1-\valpha    ) m +  (1- \widebar{\valpha    }) n } {c} \bigg),
	\end{align*}
	and split the sum according to  $(1-\valpha    , c) = q$. Thus $c = q r$ and $\valpha     = 1 - \widebar{\beta} q$, where $\beta$ ranges over residue classes modulo $r$  such that $  (\beta (q-\beta), r) = 1$. We obtain
	\begin{align*}
		S (m, n; c)  e\Big(\frac {m+n} {c} \Big) =   \sum_{qr=c} \mathop{\sum_{\beta (\mathrm{mod}\, r)}}_{ (\beta (q-\beta), r) = 1 }  e \bigg(   \frac {  \widebar{\beta} m +  \overline{q - \beta} n } {r} \bigg) , 
	\end{align*}
as desired. 
\end{proof}
	
	\subsection{Kuznetsov Trace Formula for {\protect\scalebox{1.06}{$\SL_2 (\BZ)$}}} 
	
	Let $ \{u_j (z)\}_{j=1}^{\infty}$ be an orthonormal basis of Hecke--Maass forms for $\mathrm{SL}_2 ( \BZ)$. For each $u_j (z)  $ with Laplacian eigenvalue $\lambda_j = 1 / 4 + t_j^2$ ($t_j > 0$), it has Fourier expansion of the form
	\begin{align*}
		u_j(z)=   \sqrt{ y}  \sum_{n\neq 0}\rho_j(n) K_{it_j}(2\pi|n|y)e(nx) .
	\end{align*}
As usual, write $s_j = 1/2+ i t_j$ so that $\lambda_j = s_j (1-s_j)$. 	Let $\lambda_j (n)$ ($n \geqslant 1$) be its Hecke eigenvalues. It is well known that $ \lambda_j (n)$ are all real.  We may assume  $ u_j (z)$ is even or odd in the sense that $ u_j (- \widebar{z}) = \epsilon_j u_j  (z)$ for $\epsilon_j = 1 $ or $-1$.    
Then   $\rho_j (\pm n) =   \allowbreak \rho_j (\pm 1)  \lambda_j (n) $, while $\rho_j (-1) = \epsilon_j \rho_j (1)$. 

Now we state the Kuznetsov trace formula as in \cite[Theorem 1]{Kuznetsov}. 

	\begin{lem}\label{lem: Kuznetsov}
	Let  $h (t)$ be an even function satisfying the conditions{\hspace{0.5pt}\rm:}
	\begin{enumerate} 
		\item[{\rm (i)\,}] $h (t)$ is holomorphic in  $|\operatorname{Im}(t)|\leqslant {1}/{2}+\vepsilon$,
		\item[{\rm (ii)}] $h(t)\Lt (|t|+1)^{-2-\vepsilon}$ in the above strip. 
	\end{enumerate}
	Then for $m, n \geqslant  1$ 	we have the following  identity{\hspace{0.5pt}\rm:} 
	\begin{equation}\label{2eq: Kuznetsov}
		\begin{split}
			\sum_{j = 1}^{\infty}  \omega_j h(t_j)   \lambda_j(m)   \lambda_j(n)  + \frac{1}{\pi} & \int_{-\infty}^{\infty} \omega(t) h(t) (n/m)^{i t} \sigma_{2it}(m)\sigma_{-2it}(n) \nd t\\
			&= \delta_{m, n} \cdot H +   \sum_{c= 1}^{\infty} \frac{S(m, n;c)}{c} H\bigg(\frac{4\pi\sqrt{m n}}{c}\bigg),
		\end{split}
	\end{equation}
	where  $\delta_{m, n}$ is the Kronecker $\delta$-symbol,  $S (m, n; c)$ is the Kloosterman sum, and 
	\begin{align}
		 \sigma_{\vnu} (n) = \sum_{d | n} d^{\hspace{0.5pt} \vnu},  
	\end{align}
	\begin{equation}\label{2eq: omega}
		\omega_j=\frac{ |\rho_j(1)|^2}{\cosh(\pi t_j)}, \qquad \omega (t) = \frac {1} {|\zeta(1+2it)|^2},
	\end{equation}
	\begin{align}\label{2eq: integrals} 
			  H =\frac{1}{\pi^2}\displaystyle\int_{-\infty}^{\infty}h(t)\tanh(\pi t)t \nd t , \qquad  
			  H (x)= \frac {2i} {\pi}   \int_{-\infty}^{\infty} J_{2it} (x) h (t) \frac {t \nd t} {\cosh (\pi t )}. 
		\end{align}  
\end{lem}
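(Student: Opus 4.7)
\medskip

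\noindent\textbf{Proof proposal for Lemma \ref{lem: Kuznetsov}.} The plan is the classical Poincaré series argument: construct a family of automorphic test vectors whose inner products can be computed in two different ways, equate the two evaluations to obtain a ``sum formula'' for a specific test function, and then invert a Bessel transform to reach arbitrary $h$ satisfying the stated hypotheses. Concretely, for $\mathrm{Re}(s) > 1$ and $m \geqslant 1$ I would form the Poincaré series
\[
P_m(z, s) \; = \; \sum_{\gamma \in \Gamma_\infty \backslash \mathrm{SL}_2(\BZ)} (\mathrm{Im}\,\gamma z)^s \, e( m \cdot \mathrm{Re}\,\gamma z),
\]
and work out its Fourier expansion by Bruhat decomposition: the $n$-th Fourier coefficient with $n \geqslant 1$ is an explicit weight times a sum $\sum_{c \geqslant 1} S(m,n;c) c^{-2s}$ convolved with a Bessel-type integral, while the $m$-th (diagonal) coefficient absorbs the identity-coset contribution yielding the Kronecker delta.

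Next I would compute $\langle P_m, P_n \rangle$ in two ways. Unfolding $P_m$ against $P_n$ and using Parseval in the $x$-variable produces the $\delta_{m,n}$ term plus an infinite sum indexed by $c$ of the shape $S(m,n;c)/c$ times an explicit kernel (a Bessel integral); this is the geometric side. On the other hand, the spectral decomposition of $L^2(\Gamma \backslash \mathbb{H}^2)$ into Hecke--Maass cusp forms and Eisenstein series, combined with the unfolding identities
\[
\langle P_m, u_j \rangle \; \propto \; \rho_j(m) \, G(s; t_j), \qquad \langle P_m, E(\,\cdot\,,\tfrac12 + it) \rangle \; \propto \; \frac{\sigma_{-2it}(m)}{\zeta(1+2it)} \, G(s; t),
\]
with $G(s;t)$ a product of Gamma factors of the form $\Gamma\bigl(\tfrac{s - \tfrac12 \pm it}{2}\bigr)$, gives a Parseval expansion of $\langle P_m, P_n\rangle$ as the cuspidal spectral sum plus the Eisenstein integral, with the weights $\omega_j = |\rho_j(1)|^2/\cosh(\pi t_j)$ and $\omega(t) = 1/|\zeta(1+2it)|^2$ materializing from normalization and Mellin--Barnes simplifications of the Gamma ratios.

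Equating the two expressions yields \eqref{2eq: Kuznetsov} for the distinguished test function $h_s(t) = |G(s;t)|^2$ (up to an explicit constant), whose Bessel transform in \eqref{2eq: integrals} evaluates to the kernel that appeared on the geometric side. The remaining step---and the main analytic obstacle---is to upgrade from this one-parameter family $\{h_s\}$ to arbitrary even $h(t)$ satisfying conditions (i), (ii). This is done via the Sears--Titchmarsh / Lebedev--Kontorovich inversion for the kernel $t J_{2it}(x)/\cosh(\pi t)$ (equivalently, Mellin--Barnes inversion against the Gamma factors in $G(s;t)$), expressing a general $h$ as a superposition of the $h_s$'s. The delicate point is convergence: one must justify interchange of the summation over $c$ with the inversion contour integral, and here the hypotheses---holomorphy of $h$ in $|\mathrm{Im}(t)| \leqslant 1/2 + \vepsilon$ together with the $(|t|+1)^{-2-\vepsilon}$ decay---are precisely what suffice to dominate both the cuspidal spectral sum (via Weyl law) and the Kloosterman sum of Weil-bounded terms tested against the Bessel transform $H(x)$.
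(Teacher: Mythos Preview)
Your outline is a reasonable sketch of the classical Poincar\'e-series derivation of the Kuznetsov formula, and with care it can be made rigorous. However, the paper does not prove this lemma at all: it is stated as a quotation of \cite[Theorem~1]{Kuznetsov} (``Now we state the Kuznetsov trace formula as in \cite[Theorem 1]{Kuznetsov}''), with no argument supplied. So there is nothing to compare against---the intended ``proof'' is simply a citation to Kuznetsov's original article, and you are expected to treat the formula as a black box.

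If you do want to flesh out your sketch, be aware that the inversion step is more delicate than your last paragraph suggests. The family $\{h_s\}$ coming from a single Poincar\'e pair $\langle P_m(\cdot,s_1), P_n(\cdot,s_2)\rangle$ does not by itself span a dense enough class; one typically needs either a two-parameter family in $(s_1,s_2)$, or a direct appeal to Kontorovich--Lebedev inversion together with an approximation argument to pass from rapidly decaying $h$ to those satisfying only (i)--(ii). Also, the absolute convergence of the Kloosterman side for general $h$ under merely the stated hypotheses is not automatic: one uses that holomorphy in $|\mathrm{Im}(t)|\leqslant 1/2+\vepsilon$ allows a contour shift giving $H(x)=O(x^{1+2\vepsilon})$ as $x\to 0$, which is what makes $\sum_c c^{-1}|S(m,n;c)|\,|H(4\pi\sqrt{mn}/c)|$ converge. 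You alluded to this but did not make the mechanism explicit. None of this is a gap in the present context, since the lemma is being quoted, not proven.
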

	
		The harmonic weights $\omega_j$ and $\omega (t)$ play a very minor role in our problem as
	\begin{align}\label{2eq: lower bound of omega}
		\omega_j\Gt t_j^{-\vepsilon}, \qquad \omega(t) \Gt t^{-\vepsilon} ; 
	\end{align}
	see \cite[Theorem 2]{Iwaniec-L(1)} and \cite[Theorem 5.16]{Titchmarsh-Riemann}, respectively.

	\subsection{Stationary Phase}

	We record here  \cite[Lemma A.1]{AHLQ-Bessel}, a slightly improved version of  \cite[Lemma {\rm 8.1}]{BKY-Mass}.

	\begin{lem}\label{lem: staionary phase}
		Let $\varww   \in C_c^{\infty} (a, b)$. Let  $f  \in C^{\infty} [a, b]$ be real-valued.  Suppose that there
		are   parameters $P, Q, R, S, Z  > 0$ such that
		\begin{align*}
			f^{(i)} (x) \Lt_{ \, i } Z / Q^{i}, \qquad \varww^{(j)} (x) \Lt_{ \, j } S / P^{j},
		\end{align*}
		for  $i \geqslant 2$ and $j \geqslant 0$, and
		\begin{align*}
			| f' (x) | \Gt R. 
		\end{align*}
		Then 
		\begin{align*}
			\int_a^b  e (f(x)) \varww (x)  \nd x \Lt_{ A} (b - a) S \bigg( \frac {Z} {R^2Q^2} + \frac 1 {R Q} + \frac 1 {R P} \bigg)^A  
		\end{align*} 
		for any  $A \geqslant 0$.
	\end{lem}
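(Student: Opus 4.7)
The plan is the standard non-stationary phase argument via repeated integration by parts. Define the differential operator
\begin{align*}
  D g = -\frac{1}{2\pi i}\bigg(\frac{g}{f'}\bigg)'.
\end{align*}
Because $(e(f))' = 2\pi i f'\hspace{0.5pt} e(f)$ and $\varww$---hence also each iterate $D^k \varww$---is compactly supported in $(a,b)$, integration by parts produces no boundary contribution, and one obtains
\begin{align*}
  \int_a^b e(f(x)) \varww(x)\hspace{0.5pt} \nd x = \int_a^b e(f(x)) (D^A \varww)(x)\hspace{0.5pt} \nd x
\end{align*}
for every integer $A \geqslant 0$. The task is then to prove the pointwise bound
\begin{align*}
  \| D^A \varww \|_\infty \Lt_A S \bigg(\frac{Z}{R^2 Q^2} + \frac{1}{RQ} + \frac{1}{RP}\bigg)^A,
\end{align*}
after which the estimate follows upon majorizing the remaining oscillatory integral by $(b-a)\,\|D^A \varww\|_\infty$.

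The main step I would carry out is to show, by induction on $A$, the structural claim that $D^A \varww$ is a finite linear combination---with combinatorial coefficients depending only on $A$---of terms of the shape
\begin{align*}
  \frac{\varww^{(j_0)}\hspace{0.5pt} f^{(i_1)} \cdots f^{(i_\ell)}}{(f')^{A + \ell}},
\end{align*}
with $\ell \geqslant 0$, $i_k \geqslant 2$, and $j_0 + \sum_{k=1}^{\ell}(i_k - 1) = A$. This is trivial at $A = 0$, and direct calculation at $A = 1$ gives $D\varww = -(2\pi i)^{-1}(\varww'/f' - \varww\hspace{0.5pt} f''/(f')^2)$; the Leibniz rule propagates the description from $A$ to $A+1$, since a derivative either hits $\varww^{(j_0)}$ (incrementing $j_0$), some $f^{(i_k)}$ (incrementing $i_k$), or the extra factor $1/f'$ that appears after one more application of $D$ (producing a fresh $f'' $ and incrementing $\ell$). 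The hypotheses of the lemma bound any such term by
\begin{align*}
  \frac{S}{P^{j_0}} \cdot \frac{1}{R^{A + \ell}} \prod_{k=1}^{\ell} \frac{Z}{Q^{i_k}} = S \bigg(\frac{1}{RP}\bigg)^{j_0} \prod_{k=1}^{\ell} \bigg[\frac{Z}{R^2 Q^2}\bigg(\frac{1}{RQ}\bigg)^{i_k - 2}\bigg],
\end{align*}
which is a product of exactly $A$ factors, each of them one of $1/(RP)$, $Z/(R^2 Q^2)$, or $1/(RQ)$. Summing the $O_A(1)$ such contributions collapses to the claimed $A$-th power.

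The main obstacle is not analytic but purely combinatorial bookkeeping: one must verify that every way the Leibniz rule can act in the step $D^A \mapsto D^{A+1}$ produces a term still of the prescribed shape and that the balance $j_0 + \sum(i_k - 1) = A$ is maintained (with $A$ incremented). Once this is checked, the remaining estimate is multinomial and otherwise insensitive to any fine structure of $f$ or $\varww$ beyond the stated derivative bounds, and multiplication by $b - a$ concludes.
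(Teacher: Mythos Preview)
Your argument is correct and is precisely the standard non-stationary phase bookkeeping: the inductive structural description of $D^A\varww$ is accurate, the balance $j_0+\sum(i_k-1)=A$ is preserved under each of the three Leibniz hits you list, and your factorization of each term into exactly $A$ copies of $1/(RP)$, $Z/(R^2Q^2)$, or $1/(RQ)$ checks out (using $j_0+\sum i_k=A+\ell$).

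The paper itself does not supply a proof: the lemma is simply recorded from \cite[Lemma~A.1]{AHLQ-Bessel} (a mild sharpening of \cite[Lemma~8.1]{BKY-Mass}). Your write-up is essentially the proof given in those references, so there is nothing to compare beyond noting that you have reconstructed it faithfully.
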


According to \cite{KPY-Stationary-Phase}, let us introduce the notion of inert functions in a simplified setting.  

\begin{defn}\label{defn: inert}
Let $\boldsymbol{I}  \subset \BR_+^{d}$ be a product of intervals {\rm(}not necessarily finite{\rm)}.  For $X \geqslant 1$, we say a smooth function $\varww \in C^{\infty} (\boldsymbol{I})$ is $X$-inert if 
\begin{align*}
\boldsymbol{x}^{\boldsymbol{i}} 	\varww^{(\boldsymbol{i})} (\boldsymbol{x})  \Lt_{\boldsymbol{i}} X^{|\boldsymbol{i}|}  , \qquad \text{($\boldsymbol{x} \in \boldsymbol{I}$),} 
\end{align*} for every $\boldsymbol{i} \in \mathbb{N}_0^{d}$, where in the multi-variable notation $\boldsymbol{x}^{\boldsymbol{i}} = x_1^{i_1} \cdots x_d^{i_d}$, $ \varww^{(\boldsymbol{i})} (\boldsymbol{x}) = \varww^{(i_1, \cdots, i_d)} (x_1, \cdots, x_d) $, and $|\boldsymbol{i}| = i_1+ \cdots + i_{d}$. 
\end{defn}

Next, we record here  a generalization of 
the stationary phase estimate in \cite[Theorem 1.1.1]{Sogge}.

\begin{lem}\label{lem: stationary phase estimates}
	Let  $ \sqrt {\lambda} \geqslant X \geqslant  1$.  	Let $\varww (x, \lambda, \boldsymbol{x})  \in C^{\infty} ((a, b) \times [X^2, \infty) \times \boldsymbol{I})$ be $ X$-inert, with compact support in the first variable $x$. Let $f (x) \in C^{\infty} [a, b]$ be   real-valued.  Suppose    $f(x_0) = f'(x_0) = 0$ at a point  $  x_0 \in (a, b)$, with $ f'' (x_0) \neq 0$ and $f' (x) \neq 0$ for all $x \in [a, b] \smallsetminus \{x_0\} $. Define
	\begin{align*}
		I (\lambda, \boldsymbol{x}) = \int_a^b e (\lambda f(x)) \varww (x, \lambda, \boldsymbol{x})  \nd x,
	\end{align*}  
then $\sqrt{\lambda} \cdot I (\lambda, \boldsymbol{x}) $ is an $X$-inert function.  
\end{lem}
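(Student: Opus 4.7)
My plan is to reduce to a standard Fresnel integral via Morse's lemma and then propagate the $X$-inertness by differentiating under the integral sign. Since $f(x_0) = f'(x_0) = 0$, $f''(x_0) \neq 0$, and $f'$ vanishes only at $x_0$ on $[a,b]$, Morse's lemma provides a smooth diffeomorphism $y \colon [a,b] \to [a',b']$, independent of $\lambda$ and $\boldsymbol{x}$, with $y(x_0) = 0$ and $f(x) = \epsilon\, y(x)^2/2$, where $\epsilon = \operatorname{sgn}(f''(x_0))$. Writing $x = x(y)$ for the inverse, the integral becomes
\begin{align*}
I(\lambda, \boldsymbol{x}) = \int_{a'}^{b'} e(\epsilon \lambda y^2/2) \, W(y, \lambda, \boldsymbol{x}) \, \nd y, \quad W(y, \lambda, \boldsymbol{x}) = \varww(x(y), \lambda, \boldsymbol{x}) \, x'(y),
\end{align*}
and $W$ is smooth with compact support in $y$, still $X$-inert in $(\lambda, \boldsymbol{x})$, and has all $\partial_y^k$-derivatives uniformly bounded on its support.

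I then rescale $u = \sqrt{\lambda}\, y$, which gives
\begin{align*}
V(\lambda, \boldsymbol{x}) := \sqrt{\lambda}\, I(\lambda, \boldsymbol{x}) = \int_{-\infty}^{\infty} e(\epsilon u^2/2) \, W(u/\sqrt{\lambda}, \lambda, \boldsymbol{x}) \, \nd u,
\end{align*}
with $|u| \Lt \sqrt{\lambda}$ on the support of the amplitude. To bound $V = O(1)$ I split the integral at $|u| = 1$: the inner piece is trivially bounded, while for $|u| > 1$, repeated integration by parts using $e(\epsilon u^2/2) = (2\pi i \epsilon u)^{-1} \partial_u e(\epsilon u^2/2)$ yields arbitrarily fast decay in $u$, since $\partial_u^k W(u/\sqrt{\lambda}, \cdots) = \lambda^{-k/2} (\partial_y^k W)$ is under control by the hypothesis $X \leqslant \sqrt{\lambda}$.

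For the inertness of $V$, derivatives in $\boldsymbol{x}$ commute with the integral and act only on $W$; since $\boldsymbol{x}^{\boldsymbol{j}} \partial_{\boldsymbol{x}}^{\boldsymbol{j}} W$ is $X^{|\boldsymbol{j}|}$-bounded and still inert in the other variables, the same Fresnel estimate yields $|\boldsymbol{x}^{\boldsymbol{j}} \partial_{\boldsymbol{x}}^{\boldsymbol{j}} V| \Lt X^{|\boldsymbol{j}|}$. For $\lambda$-derivatives the chain rule produces
\begin{align*}
\partial_\lambda [W(u/\sqrt{\lambda}, \lambda, \boldsymbol{x})] = -\frac{u}{2\lambda^{3/2}} (\partial_y W) + (\partial_\lambda W) = -\frac{1}{2\lambda} (y \partial_y W) + (\partial_\lambda W)
\end{align*}
when evaluated at $(u/\sqrt{\lambda}, \lambda, \boldsymbol{x})$. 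Multiplying by $\lambda$ converts this into the two amplitudes $-\tfrac{1}{2}(y \partial_y W)$ and $\lambda \partial_\lambda W$, both $X$-bounded and $X$-inert to the same order; iterating handles $\lambda^i \partial_\lambda^i$ and the mixed derivatives.

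The main obstacle is the dual appearance of $\lambda$ in both the amplitude $W$ and the rescaling $u / \sqrt{\lambda}$, which threatens to produce spurious powers of $1/\sqrt{\lambda}$ with each $\partial_\lambda$. The resolution is the identity $u/\lambda^{3/2} = y/\lambda$, which reassembles the rescaling-induced term into $\lambda^{-1}(y \partial_y)$ applied to $W$; this is precisely the operator controlled by the $X$-inertness of $W$, and it closes the induction.
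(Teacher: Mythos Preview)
Your argument is correct. The Morse-lemma reduction, the rescaling $u = \sqrt{\lambda}\, y$, and the integration-by-parts tail estimate are the standard ingredients of stationary phase, and your handling of the $\lambda$-derivatives via the identity $u/\lambda^{3/2} = y/\lambda$ properly closes the induction on the inertness. One small slip: you say the $\partial_y^k$-derivatives of $W$ are ``uniformly bounded'', but in fact only $\partial_y^k W = O_k(X^k)$ holds (inherited from the $X$-inertness of $\varww$ in $x$ together with the fixed diffeomorphism $x(y)$); your later appeal to $X \leqslant \sqrt{\lambda}$ to control $\partial_u^k W = O\big((X/\sqrt{\lambda})^k\big)$ shows you are tacitly using the correct bound, so the argument goes through.

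The paper itself does not give a self-contained proof here: it simply observes that, without the extra parameters $\boldsymbol{x}$, the statement is Sogge's Theorem~1.1.1 (for $X = 1$) and Lemma~7.3 of the author's earlier paper [Qi2] (for general $X$), and that adding $\boldsymbol{x}$ is harmless because $e(\lambda f(x))$ does not depend on $\boldsymbol{x}$. Your Morse-lemma argument is exactly what underlies those cited results, so you have supplied the details the paper delegated to the literature rather than taken a genuinely different route.
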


\begin{proof}
Note that if there were no variable $\boldsymbol{x}$, then this lemma is \cite[Theorem 1.1.1]{Sogge} in the case $X = 1$ and  \cite[Lemma 7.3]{Qi-GL(3)} in general.   However, since $ e ( \lambda f(x))$ does not involve $\boldsymbol{x}$, the derivatives for the added variable  $\boldsymbol{x}$ may be treated easily. 
\end{proof}

 Of course, the main theorem in \cite{KPY-Stationary-Phase} is much more general than Sogge's \cite[Theorem 1.1.1]{Sogge}, but the latter has a simpler proof and no error term.  

The next lemma is a simple application of Lemmas \ref{lem: staionary phase} and \ref{lem: stationary phase estimates}. 

\begin{lem} \label{lem: analysis of integral}
	Let  $ \gamma > 1$. 
	For $ \sqrt {\lambda} \geqslant X \geqslant  1$ and $\rho > 0$, define 
	\begin{align*}
		I_{\gamma}^{\pm} (\lambda, \boldsymbol{x}) =   \int_{\rho}^{2\rho}  e \big(\lambda \big(x \pm \gamma   x^{1/\gamma} \big) \big) \varww (x, \lambda, \boldsymbol{x}) \nd x,  
	\end{align*} 
	for an $X$-inert function  $\varww (x, \lambda, \boldsymbol{x}) \in C^{\infty} ([\rho ,  2 \rho] \times [X^2, \infty) \times \boldsymbol{I} )$, with compact support in the first variable $x$. 
	
	{\rm\,(i)} We have
	$$ I_{\gamma}^{\pm} (\lambda, \boldsymbol{x}) \Lt_A  \rho \cdot \bigg( \frac {  X  } {\lambda (\rho +\rho^{1/\gamma})}\bigg)^A  $$ 
	for any value of $\rho$ in the $+$ case, or for $ \min \big\{ \rho/\sqrt{2}, \sqrt{2}/\rho \big\}   < 1 / 2 $ in the $-$ case.  
	
	{\rm(ii)} Define  
	\begin{align*}
		\varvv_{\gamma} (\lambda, \boldsymbol{x} ) =  e (  \lambda (\gamma -1) ) \cdot \sqrt{\lambda}   I_{\gamma}^{-} (\lambda, \boldsymbol{x}  ), 
	\end{align*}  
	then $\varvv_{\gamma} (\lambda, \boldsymbol{x} )$ is an $X$-inert function for any $1/2 \leqslant \rho/\sqrt{2} \leqslant 2 $. 
\end{lem}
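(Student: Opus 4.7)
My plan is to derive part (i) from a direct application of Lemma \ref{lem: staionary phase} (non-stationary phase via repeated integration by parts), and part (ii) from Lemma \ref{lem: stationary phase estimates} after recentering the phase at its unique stationary point. Both parts rest on the explicit structure of the phase $f^\pm(x) = x \pm \gamma x^{1/\gamma}$, for which a direct computation yields
\[
x \cdot (f^\pm)'(x) = x \pm x^{1/\gamma}, \qquad (f^\pm)^{(i)}(x) \Lt_{\gamma, i} x^{1/\gamma - i} \quad (i \geqslant 2).
\]
On $x \in [\rho, 2\rho]$ the higher derivatives thus scale like $\rho^{1/\gamma}/\rho^i$, while $X$-inertness gives $|\partial_x^j \varww(x, \lambda, \boldsymbol{x})| \Lt (X/\rho)^j$.

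For part (i), the $+$ case is direct: $(f^+)'(x) > 0$ and $x \cdot (f^+)'(x) = x + x^{1/\gamma} \asymp \rho + \rho^{1/\gamma}$ throughout $[\rho, 2\rho]$. Setting
\[
R = \lambda(\rho + \rho^{1/\gamma})/\rho,\quad Q = \rho,\quad Z = \lambda \rho^{1/\gamma},\quad S = 1,\quad P = \rho/X
\]
in Lemma \ref{lem: staionary phase}, a short check shows $Z/(R^2 Q^2) \Lt 1/(RQ)$, while $1/(RP) = X/(\lambda(\rho + \rho^{1/\gamma}))$ dominates the other two terms (using $X \geqslant 1$), yielding the stated bound. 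The $-$ case is analogous once the hypothesis $\min\{\rho/\sqrt{2}, \sqrt{2}/\rho\} < 1/2$ is invoked to secure the corresponding lower bound $|(f^-)'(x)| \asymp_\gamma (\rho + \rho^{1/\gamma})/\rho$ by separating $[\rho, 2\rho]$ from the stationary point $x_0 = 1$ on a multiplicative scale.

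For part (ii), with $\rho$ in the balanced range $\sqrt{2}/2 \leqslant \rho \leqslant 2\sqrt{2}$, I would recenter the phase at $x_0 = 1$: set $F(x) = f^-(x) - f^-(1) = f^-(x) - (1 - \gamma)$, so that $F(1) = F'(1) = 0$, $F''(1) = 1 - 1/\gamma > 0$, and $F'$ does not vanish for $x \in [\rho, 2\rho] \setminus \{1\}$. The factor $e(\lambda(\gamma - 1))$ in the definition of $\varvv_\gamma$ precisely cancels $e(\lambda f^-(1))$, yielding
\[
\varvv_\gamma(\lambda, \boldsymbol{x}) = \sqrt{\lambda} \int_\rho^{2\rho} e(\lambda F(x)) \varww(x, \lambda, \boldsymbol{x}) \, \nd x,
\]
and Lemma \ref{lem: stationary phase estimates} applied to the phase $F$ directly gives that $\varvv_\gamma$ is $X$-inert in $(\lambda, \boldsymbol{x})$. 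The chief obstacle will be the lower bound on $|(f^-)'|$ in part (i), where the separation hypothesis must be exploited with a careful case split; a secondary subtlety in part (ii) arises in the sub-range $1 \leqslant \rho \leqslant 2\sqrt{2}$ when $x_0 = 1$ lies at or outside the support of $\varww$, in which case Lemma \ref{lem: stationary phase estimates} must be supplemented by a non-stationary boundary argument, the correction being absorbed into the $X$-inert conclusion through the bound already established in part (i).
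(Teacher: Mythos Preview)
Your proposal is correct and follows exactly the route the paper indicates: the paper's own proof consists of the single sentence ``The next lemma is a simple application of Lemmas~\ref{lem: staionary phase} and~\ref{lem: stationary phase estimates},'' and you have supplied precisely that application---Lemma~\ref{lem: staionary phase} with the parameters $P=\rho/X$, $Q=\rho$, $Z=\lambda\rho^{1/\gamma}$, $R=\lambda(\rho+\rho^{1/\gamma})/\rho$ for part~(i), and Lemma~\ref{lem: stationary phase estimates} after recentering at $x_0=1$ for part~(ii). Your identification of the two subtleties (the lower bound on $|(f^-)'|$ near the dyadic boundary in~(i), and the case $x_0\notin(\rho,2\rho)$ in~(ii), handled by extending $\varww$ by zero) is apt and goes beyond what the paper spells out.
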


\begin{remark}
	It will be used implicitly the fact that the implied constants in the bounds for the derivatives of $ \varvv_{\gamma} (\lambda, \boldsymbol{x} ) $ depend uniformly on those for $ \varww (x, \lambda, \boldsymbol{x}) $. 
\end{remark}

\subsection{Hybrid Large Sieve of Young}  
Let $\gamma \neq 0$,  $\tau, v   > 0$,  and $C, N \Gt 1$.  The following hybrid large sieve inequality is a special case of Young's Lemma 6.1 in \cite{Young-GL(3)-Special-Points},  
	\begin{align}\label{2eq: hybrid ls, Young}
		\int_{-\tau}^{\tau}   \sum_{c \shskip \leqslant C}  \, \sumx_{   \valpha      (\mathrm{mod} \, c) }   \bigg|  \sum_{ n  \sim N }    a_{n}   e \Big(   \frac {\valpha     n} {c} \Big)  e  \bigg(   \frac { n^{\gamma} t} { v }  \bigg)   \bigg|^2    \nd t \Lt_{\gamma}   \big(\tau C^2  + v N^{1-\gamma} \big)   \sum_{ n  \sim N }   |a_n|^2. 
	\end{align}
The variant as follows will be more convenient for our applications. 

\begin{lem}\label{lem: Young's LS}
We have 
	\begin{align}\label{2eq: hybrid ls, Young, 2}
		\begin{aligned}
			\int_{-\tau}^{\tau} \hspace{-1pt} \sum_{c \shskip \leqslant C}  \frac 1 {c} \, \sumx_{   \valpha      (\mathrm{mod} \, c) } \hspace{-1pt} \bigg| \hspace{-1pt}    \sum_{ n  \sim N}    a_{n}   e \Big(   \frac {\valpha     n} {c} \Big)  e  \bigg(   \frac { n^{\gamma} t} {c v} \bigg) \hspace{-1pt} \bigg|^2 \hspace{-1pt}  \nd t \Lt_{\gamma} \hspace{-1pt} \big(\tau C + v N^{1-\gamma} \log C \big) \hspace{-2pt}   \sum_{ n \sim N }    |a_n|^2,
		\end{aligned}
	\end{align}
for any complex $a_n$.  
\end{lem}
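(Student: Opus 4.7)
The plan is to deduce \eqref{2eq: hybrid ls, Young, 2} directly from Young's bound \eqref{2eq: hybrid ls, Young} by combining a rescaling trick in the integration variable with a dyadic decomposition in $c$.

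First I would swap the order of integration and summation, and then for each fixed $c$ perform the change of variables $u = t/c$ in the inner $t$-integral. This substitution turns $dt$ into $c\, du$, sends the range $[-\tau,\tau]$ to $[-\tau/c,\tau/c]$, and—crucially—transforms the exponential $e(n^{\gamma} t/(cv))$ into $e(n^{\gamma} u/v)$, which is precisely the exponential appearing in \eqref{2eq: hybrid ls, Young}. The Jacobian factor $c$ exactly cancels the prefactor $1/c$. Hence the left-hand side of \eqref{2eq: hybrid ls, Young, 2} equals
\[
\sum_{c\shskip\leqslant C}\,\int_{-\tau/c}^{\tau/c}\,\sumx_{\valpha(\mathrm{mod}\,c)}\,\bigg|\sum_{n\sim N} a_{n} e\Big(\frac{\valpha n}{c}\Big) e\Big(\frac{n^{\gamma} u}{v}\Big)\bigg|^{2}\,\nd u.
\]

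Next I would split the outer $c$-sum dyadically as $\sum_{c\shskip\leqslant C} = \sum_{C_{0}}\sum_{c\shskip\sim C_{0}}$ running over dyadic $C_{0}\leqslant C$. For each such $C_{0}$, the integration range $[-\tau/c,\tau/c]$ is contained in $[-\tau/C_{0},\tau/C_{0}]$, so I can uniformly enlarge the range and pull the integral out of the $c\sim C_{0}$ sum. Then \eqref{2eq: hybrid ls, Young} applies with the parameters $\tau\mapsto\tau/C_{0}$ and $C\mapsto 2C_{0}$, giving a bound of
\[
\bigl((\tau/C_{0})\,(2C_{0})^{2} + v\,N^{1-\gamma}\bigr)\sum_{n\sim N}|a_{n}|^{2} \Lt_{\gamma} \bigl(\tau C_{0} + v\,N^{1-\gamma}\bigr)\sum_{n\sim N}|a_{n}|^{2}
\]
for the contribution of the dyadic block $c\sim C_{0}$.

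Finally, summing over the $O(\log C)$ dyadic values $C_{0}\leqslant C$, the geometric progression $\sum_{C_{0}}\tau C_{0}\Lt \tau C$ recovers the first term on the right-hand side of \eqref{2eq: hybrid ls, Young, 2}, while the $C_{0}$-independent term $v N^{1-\gamma}$ accumulates into $v N^{1-\gamma}\log C$. There is no substantive obstacle here; the only mild subtlety is precisely this logarithmic loss, which is unavoidable because the "diagonal" contribution $vN^{1-\gamma}$ in \eqref{2eq: hybrid ls, Young} does not shrink with $C_{0}$, and it matches the $\log C$ factor stated in \eqref{2eq: hybrid ls, Young, 2}.
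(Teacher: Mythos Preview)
Your proof is correct and follows essentially the same approach as the paper: the change of variables $u=t/c$ (equivalently $t\to ct$) followed by a dyadic partition in $c$ and an application of \eqref{2eq: hybrid ls, Young} on each block. The paper's proof is terser but identical in substance.
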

\begin{proof}
	By the change $t \ra c t$ we rewrite the expression on the left  as
	\begin{align*}
		\sum_{   c \shskip \leqslant   C }  \,	 \int_{- \tau/ c}^{\tau/ c}   \ 	\sumx_{\valpha     (\mod c)}  \bigg| \sum_{   n \sim N }   a_{n}   e \Big(   \frac {\valpha     n} {c} \Big) e     \bigg(   \frac { n^{\gamma} t} { v } \bigg) \bigg|^2  \nd t , 
	\end{align*}  then via a dyadic partition \eqref{2eq: hybrid ls, Young, 2} follows easily from  \eqref{2eq: hybrid ls, Young}. 
\end{proof}

\subsection{Maass Forms for {\protect\scalebox{1.06}{$\SL_3 (\BZ)$}}}  We refer the reader to Goldfeld's book \cite{Goldfeld} for the theory of Maass forms for $\SL_3 (\BZ)$.  

Let $\phi$ be a Hecke--Maass   form for $\SL_3 (\BZ)$ of Fourier coefficients $ A (m    , n    )  $, normalized so that $ A(1,1) = 1 $, and Langlands parameters $ \{ \lambda    _1, \lambda    _2, \lambda    _3 \} $, with $\lambda    _1+\lambda    _2+\lambda    _3 = 0$. The dual Maass form $\widetilde{\phi}$ has Fourier coefficients $ A(n    , m    ) = \overline{A(m    , n    )}$ and Langlands parameters $ \left\{ -\lambda    _1, -\lambda    _2, -\lambda    _3 \right\} =  \left\{ \overline{\lambda}_1, \overline{\lambda}_2, \overline{\lambda}_3 \right\}$. For later use, we record here the  Rankin--Selberg estimate:
\begin{equation}\label{2eq: RS}
	\mathop{\sum\sum}_{m^2 n \shskip \leqslant X}|A(m, n)|^2 \Lt  X ,
\end{equation}  
together with the Hecke relation
\begin{align*}
	A (m, n) = \sum_{d|(m,n)} \mu (d ) A(m/d, 1) A(1, n/d),  
\end{align*}
we deduce by  the Cauchy inequality that 
\begin{equation}\label{2eq: RS, 2}
	\sum_{m \leqslant X} \sum_{n \leqslant Y}    {|A(m, n)|^2}   \Lt  (XY)^{1+\vepsilon}. 
\end{equation} 
\delete{Moreover, it is known by   Kim--Sarnak \cite{Kim-Sarnak} that 
\begin{align}\label{2eq: Kim-Sarnak}
	|\mathrm{Re} (\lambda_1)| , \,	|\mathrm{Re} (\lambda_2)| , \,	|\mathrm{Re} (\lambda_3)| \leqslant \frac {5} {14}.  
\end{align}}

As in \cite[\S 6.5]{Goldfeld} or \cite[\S 6]{Miller-Schmid-2006}\footnote{It is slightly inconsistent that  $ L_{{\phi}} (s) $ in \cite[\S 6.5]{Goldfeld} or  $L (s, {\phi})$ in \cite[\S 6]{Miller-Schmid-2006} is  $ L_{\widetilde{\phi}} (s) $ in \cite[\S 9.4]{Goldfeld}. },  define the $L$-function attached to $\phi$ by
\begin{align}
	L(s,\phi)=\sum_{n=1}^{\infty}\frac{A(1,n)}{n^s},
\end{align} 
for $\mathrm{Re} (s) > 1$, and by analytic continuation for all $s$ in the complex plane. The $\gamma$-factor of $\phi$ is equal to
\begin{align}\label{2eq: gamma (s, phi)}
	\gamma (s, \phi) = \pi^{-3s/2} \Gamma \bigg(\frac {s+\lambda    _1}2 \bigg) \Gamma \bigg(\frac {s+\lambda    _2}2 \bigg) \Gamma \bigg(\frac {s+\lambda    _3}2 \bigg). 
\end{align}
The functional equation for $ L(s,\phi) $ reads
\begin{align}\label{2eq: FE, GL(3)}
	\gamma (s, \phi) L(s, \phi) = \gamma (1-s, \widetilde{\phi})  L (1-s, \widetilde{\phi}) . 
\end{align}

{\begin{remark}
For $\phi$ of type $ (\vnu_1, \vnu_2)$, its Langlands parameters are given by 
\begin{align*}
	\lambda    _1 = 1 - 2 \vnu_1 - \vnu_2   , \qquad \lambda    _2 =   \vnu_1 - \vnu_2, \qquad \lambda    _3 =  - 1 + \vnu_1 +2 \vnu_2 . 
\end{align*}
\end{remark}
}

\subsection{Rankin--Selberg $L$-function {\protect\scalebox{1.06}{$L (s, \phi \times u_j)$}}} Define
\begin{align}\label{2eq: defn L(s)}
	L (s, \phi \times u_j) = \sum_{m    =1}^{\infty} \sum_{n    =1}^{\infty}\frac{A(m    ,n    )\lambda_j (n    )}{(m    ^2 n    )^s},  
\end{align}
for $\mathrm{Re} (s) > 1$, and it admits analytic continuation to the whole complex plane.  
Let us introduce $\delta_j = 0$ or $1$ according as $u_j$ is even or odd, and define the $\gamma$-factor
\begin{align}\label{2eq: gamma (s, phi uj)}
	\gamma (s, \phi \times u_j) =   \gamma (s + \delta_j - i t_j, \phi  ) \gamma (s + \delta_j + i t_j, \phi  ). 
\end{align}
\delete{so that $\gamma (s, \phi \times u_j) = \gamma_{\delta_j} (s, t_j, \phi)$ if we define
\begin{align}
	\gamma_{\delta} (s, t, \phi) = \gamma (s + \delta - t, \phi  ) \gamma (s + \delta + t, \phi  ). 
\end{align}}
Then the functional equation for $ L(s,\phi \times u_j) $ reads
\begin{align}\label{2eq: FE, GL(3) x GL(2)}
	\gamma (s, \phi \times u_j) L(s, \phi \times u_j) = \epsilon_j \gamma (1-s, \widetilde{\phi} \times u_j)  L (1-s, \widetilde{\phi} \times u_j) . 
\end{align}

\subsection{Vorono\"i Summation Formula for {\protect\scalebox{1.06}{$\SL_3 (\BZ)$}}} 
 
The Vorono\"i summation formula for $\SL_3 (\BZ)$ was established by Miller and Schmid \cite{Miller-Schmid-2006}. However,  we propose here to use the version of Miller and Zhou \cite{MZ-Voronoi}\footnote{The Vorono\"i summation formula for $\GL_N $ in \cite{MZ-Voronoi} is normalized (with only an extra factor $1/|y|$ in the Hankel transform) so that it coincides with the classical Vorono\"i summation formula  for $\GL_2$ and   Poisson summation formula for $\GL_1$.}. 

\begin{lem}\label{lem: Voronoi}
	
	For $\omega  \in C_c^{\infty} (0, \infty)$ define its Hankel transform $\Omega $ by
	\begin{align*}
		\Omega (\pm y) =  \frac{1}{4 \pi i}\int_{(-1)}G^{\pm}(s)\widetilde{\omega}(s) y^{s-1}\nd s  ,
	\end{align*}
	\footnote{Note that the $1/2\pi i$ in  (7) of \cite{MZ-Voronoi} should be $1/4\pi i$ as in  (1.7) of \cite{Miller-Schmid-2009}.}where  $\widetilde{\omega} (s)$ is the Mellin transform of $\omega (x) $, and
	\begin{equation*}
		G^{\pm}(s)=\frac{\gamma(1-s, \widetilde{\phi})}{\gamma(s, {\phi})}\pm \frac 1 {i^3} \frac{\gamma(2-s, \widetilde{\phi})}{\gamma(1+s, {\phi})} . 
	\end{equation*}
	Let $\valpha, \widebar{\valpha}, c, m   $ be integers with   $\valpha \widebar{\valpha} \equiv 1 (\mathrm{mod}\, c)$ and $c, m   > 0$.	Then    we have 
	\begin{equation}
		\begin{split}
		   \sum_{n=1}^{\infty}    {A(m   ,n)}  e\bigg(  \frac{\widebar{\valpha}n}{c}\bigg) \omega (n)  =     \sum_{\pm}   \sum_{d\mid c m   }  d       \sum_{n =1}^{\infty}   A(n , d)  \frac  {S\left(\pm n,  {\valpha}m    ;c m   /d\right)} {c^2 m }  \Omega \bigg( \hspace{-2pt} \mp \frac{d^2 n} {c^3 m   } \bigg) .  
		\end{split}
	\end{equation}  
\end{lem}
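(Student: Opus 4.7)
\textbf{Proof plan for Lemma \ref{lem: Voronoi}.}
The plan is to deduce the formula from the functional equations of $L(s,\phi)$ twisted by Dirichlet characters, combined with Mellin inversion, essentially following the strategy of Miller--Schmid \cite{Miller-Schmid-2006} (or its normalized GL$_N$ reformulation by Miller--Zhou). First I would reduce to the base case $m = 1$ using the Hecke relation $A(m, n) = \sum_{d \mid (m, n)} \mu(d) A(m/d, 1) A(1, n/d)$, together with the standard Hecke factorization on the dual side; this converts the single divisor $c$ into the divisor $cm/d$ appearing in the formula, and supplies the explicit factor $d$ and the weight $A(n, d)$ via the transposed Hecke relation for $\widetilde{\phi}$.

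Next I would apply Mellin inversion: writing $\omega(n) = \frac{1}{2\pi i} \int_{(\sigma)} \widetilde{\omega}(s) n^{-s}\,\nd s$ for $\sigma$ large, the left-hand side becomes
\begin{equation*}
\frac{1}{2\pi i} \int_{(\sigma)} \widetilde{\omega}(s) \sum_{n = 1}^{\infty} \frac{A(1, n)}{n^s} e\bigg(\frac{\widebar{\valpha} n}{c}\bigg) \nd s.
\end{equation*}
The inner Dirichlet series, an additively twisted $L$-function attached to $\phi$, is the main object. Using orthogonality to decompose the additive character into multiplicative characters modulo divisors of $c$ via Gauss sums, I would express it as a finite linear combination of twisted $L$-functions $L(s, \phi \otimes \chi)$. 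Each of these satisfies a functional equation with $\gamma$-factor $\gamma(s, \phi)$ times powers of the conductor, coming from the standard theory for $\SL_3(\BZ)$ Maass forms.

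Shifting the contour from $\operatorname{Re}(s) = \sigma$ to $\operatorname{Re}(s) = -1$ (picking up no poles, since $L(s, \phi)$ is entire), I would then apply the functional equation term by term. The resulting $L$-values at $1 - s$, on the dual side, are expanded again as Dirichlet series in the dual Fourier coefficients $A(n, 1) = \overline{A(1, n)}$ (or $A(n, d)$ after restoring $m$), and the Gauss sums from the character decomposition recombine---via the Chinese Remainder Theorem and the definition of Kloosterman sums---into $S(\pm n, \valpha m; cm/d)$. The $G^{\pm}(s)$ kernels in the Hankel transform $\Omega$ arise from the two parity combinations of the $\gamma$-factors, exactly matching the shape of the functional equation for $\phi$ twisted by an even versus odd character.

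The main obstacle will be the combinatorial bookkeeping in step three: correctly matching, for each divisor $d \mid cm$, the Gauss sums produced by the character decomposition with the Kloosterman sums $S(\pm n, \valpha m; cm/d)$ and the normalizing factor $d/(c^2 m)$. This is where the Miller--Zhou normalization is convenient, since it is designed so that, after all Gauss sums are evaluated and the extra Hecke divisors collapsed, the output matches the $\GL_2$ and $\GL_1$ formulas in a uniform way and no stray $\pi$'s or sign factors remain.
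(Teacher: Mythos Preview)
The paper does not supply a proof of this lemma; it is quoted as a known result, citing Miller--Schmid \cite{Miller-Schmid-2006} for the original $\SL_3(\BZ)$ Vorono\"i formula and Miller--Zhou \cite{MZ-Voronoi} for the balanced version stated here with general first index $m$. So there is no ``paper's own proof'' to compare against.

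Your outline is a viable route and is in fact one of the standard ones---Mellin inversion plus the functional equation of additively/multiplicatively twisted $L$-functions, then recombination of Gauss sums into Kloosterman sums on the dual side. It is worth noting, though, that this is not quite how the cited references proceed: Miller--Schmid argue via automorphic distributions on $\GL_3(\BR)$, and Miller--Zhou obtain the balanced $\GL_N$ formula from a global zeta-integral/functional-equation argument that handles the extra index $m$ directly rather than by a Hecke-relation reduction to $m=1$. Your plan to reduce to $m=1$ via $A(m,n)=\sum_{d\mid(m,n)}\mu(d)A(m/d,1)A(1,n/d)$ does work, but the matching on the dual side---getting the divisor condition $d\mid cm$, the modulus $cm/d$ in the Kloosterman sum, the weight $A(n,d)$, and the factor $d/(c^2 m)$ all to line up---is genuinely delicate, and you have correctly flagged it as the main obstacle. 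This bookkeeping is exactly what the Miller--Zhou normalization is engineered to make transparent.
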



	
	According to \cite[\S \S 3.3, 14]{Qi-Bessel}, there is a Bessel kernel $J_{\phi} (x)$  attached to $\phi$  so that the Hankel transform may indeed be realized as an integral transform
\begin{align}\label{2eq: Hankel}
	\Omega (y) = \int_{0}^{\infty} \omega (x) J_{\phi} ( - x y) \nd x,
\end{align}
and  the following asymptotic expansion holds:
\begin{align}\label{4eq: asymptotic, Bessel, R} 
	J_{\phi}  (\pm x ) & =     \frac { { e  \left(   \pm  3 x^{1/3}  \right)   }}  {x^{1/3} }   \sum_{k= 0}^{K-1} \frac {B^{\pm}_{k } }  {x^{  k/3  }}   +  O  \bigg( \frac 1 {x^{ (K + 1) / 3}} \bigg), 
\end{align}
for $x \Gt  1$, where $B^{\pm}_{k }$ are some constants depending on the Langlands parameters of $\phi$.

	\section{Analysis for the Bessel Integral}

Subsequently, we shall always assume $ T^{\vepsilon} \leqslant M \leqslant T^{1-\vepsilon} $. 
Let us write  $h (t)$ defined by \eqref{1eq: weight k} as follows: 
\begin{align}\label{3eq: defn h(t)}
	h (t) = \upbeta \bigg(     \frac {t - T } {M }    \bigg) \hspace{-1pt} + \upbeta \bigg(     \frac {t + T } {M }    \bigg), \qquad  \upbeta (r) = \exp \big( \hspace{-1pt}    - r^2\big),
\end{align}
and   define
\begin{equation}\label{3eq: defn h(t; y)}
	h( t; y ) =  h (t)  	\cos  ( 2  t \log y  ).  
\end{equation}
Note that  $h( t; y )$ is even in $t$ as required by the Kuznetsov trace formula.   
The purpose of this section is to study its Bessel integral
	\begin{align}
		H (x, y) = \frac{2 i } {\pi} \int_{-\infty}^{\infty} J_{2it} (x) h (t; y) \frac {t \nd t} {\cosh (\pi t) }. 
	\end{align}
Some preliminary analysis will be similar to that in Xiaoqing Li's work  \cite{XLi2011}. 

\subsection{} 

For any non-negative integer $A$,  let $2A + 1 < 2\delta < 2A +3 $.  By contour shift,  
\begin{align*}
	H    (x, y)   =    	\frac 2 {\pi i}    \sum_{k = 0}^{A}   (-1)^k (2k+1) \cdot    J_{2k+1} (  x) h    (- (k+1/2) i; y)  & \\
	  - \frac 2 {\pi i}  \int_{-\infty}^{\infty}   {J_{  2it + 2 \delta } (x)  }  h    (  t - \delta i   ; y  )    \frac {   t - \delta i    } {\cos  (\pi  ( i t +  \delta   )  )} \nd t &.
\end{align*}
By  the Poisson  integral representation
(see  \cite[3.3 (5)]{Watson}): 
\begin{align*}
	J_{\vnu} (x) = \frac {  ( x/2  )^{\vnu}} {\sqrt{\pi} \Gamma   (\vnu +     1 / 2  )  } \int_0^{\frac 1 2 \pi}   \cos (x \cos \theta ) \sin^{2 \vnu} \theta \,  {\nd}   \theta, \qquad  \mathrm{Re}  (\vnu)  > - \frac 1 2, 
\end{align*} 
along with the Stirling formula,
we infer that  
\begin{align*}
	J_{2k+1} ( x)  \Lt  x^{2k+1}, \qquad 
	\frac {J_{  2it + 2 \delta } (x)  } {\cos  (\pi  ( i t +  \delta   )  ) }   \Lt_{\delta}    \lp \frac { x  } { |t| + 1   } \rp^{2 \delta}  . 
\end{align*}  
Consequently, if we write
\begin{align*}
	u =   {xy} + x/y, 
\end{align*} then it follows from  \eqref{3eq: defn h(t)} and \eqref{3eq: defn h(t; y)} that 
\begin{align*}
	H (x, y) \Lt \upbeta (T/M)  \sum_{k=0}^{A} u^{2k+1}  +   \frac { M   u^{2 \delta} } {T^{2 \delta -1}} \Lt  \frac { M   u } {T^{2 A }}  ,
\end{align*} 
provided $u \Lt 1$.

\begin{lem}\label{lem: x<1}
Let $u =   {xy} + x/y$. Then for $u \Lt 1$, we have $H (x, y) = O_{A}  ( M  u / T^{2A}  )$ for any integer $A \geqslant 0$. 
	
\end{lem}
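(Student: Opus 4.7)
The plan is to follow through with the contour-shift computation laid out in the paragraph just preceding the lemma statement. Specifically, I would shift the $t$-contour in the definition of $H(x,y)$ from $\BR$ down to $\mathrm{Im}(t) = -\delta$ for a real $\delta$ with $2A+1 < 2\delta < 2A+3$, picking up the residues of $1/\cosh(\pi t)$ at the poles $t = -(k+1/2) i$ for $0 \leqslant k \leqslant A$. This produces the decomposition $H(x,y) = (\text{finite residue sum}) + (\text{integral on $\mathrm{Im}(t) = -\delta$})$ already displayed in the text.

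For the residues, the Poisson integral representation of the Bessel function yields $J_{2k+1}(x) \Lt x^{2k+1}$, while the cosine factor in \eqref{3eq: defn h(t; y)} evaluated at $-(k+1/2)i$ becomes $\cosh((2k+1)\log y)$, which is of size $y^{2k+1} + y^{-(2k+1)}$. Their product is thus at most $(xy)^{2k+1} + (x/y)^{2k+1} \leqslant u^{2k+1}$; this last inequality follows because $xy$ and $x/y$ are non-negative with sum $u$, and it collapses to $\Lt u$ after invoking the hypothesis $u \Lt 1$. The Gaussian factor $h(-(k+1/2)i)$ is of size $\upbeta(T/M) = \exp(-(T/M)^2)$, which, since $M \leqslant T^{1-\vepsilon}$, is super-polynomially small in $T$ and in particular $O_A(1/T^{2A})$. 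So the residue sum is safely within the target.

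For the shifted integral, the Poisson integral representation together with Stirling gives $|J_{2it+2\delta}(x)/\cos(\pi(it+\delta))| \Lt_{\delta} (x/(|t|+1))^{2\delta}$, while $\cos(2(t - \delta i)\log y)$ has modulus bounded by $y^{2\delta} + y^{-2\delta}$. Pairing these with $x^{2\delta}$ and using $u \Lt 1$ yields a factor of $(xy)^{2\delta} + (x/y)^{2\delta} \leqslant u^{2\delta}$. The remaining Gaussian $h(t - \delta i)$ localizes $t$ to a window of length $\asymp M$ around $\pm T$, where $|t - \delta i| \asymp T$, and so the integral is of size $M \cdot T \cdot u^{2\delta}/T^{2\delta} = Mu^{2\delta}/T^{2\delta - 1}$; since $u \leqslant 1$ and $2\delta > 2A + 1$, this is $\Lt Mu/T^{2A}$ as required. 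The one routine check — that the horizontal contour shift is legitimate — follows from the super-polynomial decay of the Gaussian $h(t)$ in any fixed horizontal strip, so there is no real obstacle beyond carefully bookkeeping the $y$-weights through the imaginary shift.
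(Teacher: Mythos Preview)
Your proposal is correct and follows exactly the same approach as the paper: the contour shift to $\mathrm{Im}(t)=-\delta$ with $2A+1<2\delta<2A+3$, the Poisson--Stirling bounds for the Bessel pieces, and the recombination of $x^{2k+1}(y^{2k+1}+y^{-(2k+1)})$ and $x^{2\delta}(y^{2\delta}+y^{-2\delta})$ into powers of $u$. The paper leaves the $y$-bookkeeping implicit (simply citing \eqref{3eq: defn h(t)} and \eqref{3eq: defn h(t; y)}), whereas you have spelled it out, but there is no difference in substance.
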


\subsection{} 

We start with the Mehler--Sonine integral as in [Wat, 6.21 (12)]: 
\begin{align*}
	J_{  \vnu} (x) = \frac 2 {\pi} \int_0^{\infty} \sin (x \cosh r - \vnu \pi/2) \cosh ( \vnu r) \nd r, \qquad \text{($|\mathrm{Re} (\nu)| < 1$)}, 
\end{align*}
so that
\begin{align*}
	\frac{J_{2it} (x) - J_{-2it} (x)} {\cosh (\pi t)}
	=   \frac {2} {\pi i} \tanh (  \pi t) \int_{-\infty}^{\infty} \cos (x \cosh r) \cos (2  t r) \nd r. 
\end{align*}
For $x \Gt 1$, it follows by partial integration that only a negligibly small error will be lost if the integral
above is truncated at $|r| = T^{\vepsilon}$.  Therefore, up to a  negligibly small error $ 	H (x, y) $ is equal to
\begin{align*}
  \frac{4 } {\pi^2}  \int_{0}^{\infty} t \upbeta ((t-T)/M) \cos (2 t \log y)  \tanh (\pi t) \int_{-T^{\vepsilon} }^{T^{\vepsilon}}   \cos (x \cosh r) \cos (2  t r) \nd r   {  \nd t} . 
\end{align*}
Next we change the order of integration,  remove the factor $\tanh (\pi t)$ as $   \tanh (\pi t) = 1 + O (  \exp (-2\pi t))$ ($t > 0$), and extend the  $t$-integrals onto $(-\infty, \infty)$, then, again up to a  negligible error, this is simplified into
\begin{align*}
	\frac{4 } {\pi^2}  \int_{-T^{\vepsilon} }^{T^{\vepsilon}}  \cos (x \cosh r) \int_{-\infty}^{\infty} t \upbeta ((t-T)/M) \cos (2t (r-\log y))  {  \nd t} \nd r  . 
\end{align*} 
On the change of variables  $r \ra r + \log y$ and $t \ra T + M t$, this integral   turns into the sum of 
\begin{align*}
	\frac{4 M T} {\pi^2} \int_{-T^{\vepsilon} }^{T^{\vepsilon}}  \cos (x \cosh (r+\log y)) \int_{-\infty}^{\infty} \upbeta (t)   \cos (2 T r + 2  M t r )  {  \nd t} \nd r  , 
\end{align*}
and
\begin{align*}
	\frac{4 M^2} {\pi^2}  \int_{-T^{\vepsilon} }^{T^{\vepsilon}}  \cos (x \cosh (r+\log y)) \int_{-\infty}^{\infty} t \upbeta (t)     \cos (2 T r + 2  M t r )  {  \nd t} \nd r .
\end{align*}
It is easy to show that the phase 
\begin{align*}
	x \cosh (r+\log y) = \frac 1 2 \cosh r \cdot \bigg(xy + \frac x y \bigg) + \frac 1 2 \sinh r \cdot \bigg(xy - \frac x y \bigg) , 
\end{align*}
and, as $ \upbeta (t)  $ is even,  the inner integrals 
\begin{align*}
	\int_{-\infty}^{\infty} \upbeta (t)   \cos (2 T r + 2  M t r )  {  \nd t}  
	 =   {\sqrt{\pi}}   \upbeta (Mr)   \cos ( 2 T r) ,
\end{align*}
\begin{align*}
	\int_{-\infty}^{\infty} t \upbeta (t)   \cos (2 T r + 2 M t r )  {  \nd t} 
	  =  - {\sqrt{\pi}}  M r   \upbeta (Mr)   \sin ( 2 T r) , 
\end{align*}
by simple trigonometric calculations and applications of \cite[3.896 4, 3.952 1]{G-R}. Note that $\upbeta' (r) = -2r \upbeta (r)$, so the integrals above become 
\begin{align*} 
		\frac{4 M T} {\pi \sqrt{\pi}} \int_{-T^{\vepsilon} }^{T^{\vepsilon}} \upbeta (Mr) \cos (2 Tr)  \cos (2 ((v+w) \cosh r + (v-w) \sinh r))   \nd r  ,  
\end{align*}
and
\begin{align*} 
 	\frac{2 M^2} {\pi \sqrt{\pi}}  \int_{-T^{\vepsilon} }^{T^{\vepsilon}}  \upbeta' (Mr)  \sin (2 Tr)  \cos (2((v+w) \cosh r + (v-w) \sinh r))  \nd r  ,  
\end{align*}
if we set
\begin{align*}
	v = \frac {xy} {4}, \qquad w = \frac {x/y} {4}. 
\end{align*}
Since $ \upbeta (Mr) $ and $ \upbeta' (Mr)$ are of exponential decay, the integrals may be effectively truncated at $ |r| = M^{\vepsilon} / M $. 
So far, we have established the following integral formula for $H ( x, y) $.  

\begin{lem}\label{lem: x>1}
For $x \Gt 1$, we have the expression
	 \begin{align}\label{4eq: H = I}
	 \begin{aligned}
	 		H ( x, y) = 	 \mathrm{Re} \big\{  \exp (2i (v+w)) I (v, w)  \big\}  
	 	+ O_{A}   (T^{-A}   )  ,  \qquad v = \frac {xy} {4}, \quad w = \frac {x/y} {4}, 
	 \end{aligned}
	 \end{align}
	 for any $A \geqslant 0$, with
	 \begin{align}\label{4eq: I}
	 I(v, w) =	M T \int_{-M^{\vepsilon}/ M}^{M^{\vepsilon}/M} g (r )  \exp (2 i   (v \uprho_{+}  (r) - w \uprho_{-}  (r)) )  \nd r,
	 \end{align}
 in which 
	 \begin{align}\label{3eq: defn of g(r)}
	 g (r ) =	\frac {2 } {{\pi \sqrt{\pi}}} \big(2 \upbeta (Mr) \cos (2Tr) + M/T \cdot \upbeta' (Mr) \sin (2Tr) \big), 
	 \end{align}
 \begin{align}\label{4eq: rho (r)}
 	\uprho_{+} (r) = \sinh r + \cosh r - 1, \qquad \uprho_{-} (r) = \sinh r - \cosh r + 1. 
 \end{align}
\end{lem}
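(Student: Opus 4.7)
My plan is to start from the Mehler--Sonine representation of $J_\vnu(x)$ valid in $|\mathrm{Re}(\vnu)| < 1$, which when specialized to $\vnu = \pm 2it$ and combined antisymmetrically yields
\[
\frac{J_{2it}(x) - J_{-2it}(x)}{\cosh(\pi t)} = \frac{2}{\pi i}\tanh(\pi t)\int_{-\infty}^{\infty} \cos(x\cosh r)\cos(2tr)\,\nd r.
\]
Since $h(t;y)$ is even in $t$, only this antisymmetric piece of $J_{2it}(x)/\cosh(\pi t)$ contributes to $H(x,y)$, turning the Bessel integral into an iterated double integral over $(r,t)$.

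For $x \Gt 1$, I would first truncate the inner $r$-integral to $|r| \leqslant T^{\vepsilon}$: the derivative of $x\cosh r$ is $x\sinh r$, so repeated integration by parts produces arbitrary polynomial decay once $|r|$ exceeds $T^{\vepsilon}/T$, while Gaussian decay of $h$ confines $|t|$ to essentially $T + M^{1+\vepsilon}$. I would then swap the order of integration, replace $\tanh(\pi t)$ by $1$ at a cost of $O(e^{-2\pi t})$, symmetrize the $t$-integral to the full real line, and split $h(t)$ into its two Gaussian summands $\upbeta((t \mp T)/M)$. The change of variables $r \to r + \log y$ and $t \to T + Mt$ converts the phase $2tr$ into $2Tr + 2Mtr$, while the Bessel phase opens via hyperbolic addition into
\[
x\cosh(r+\log y) = 2(v+w)\cosh r + 2(v-w)\sinh r.
\]

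The inner $t$-integrals now reduce to pure Gaussians against $\cos(2Tr + 2Mtr)$ and $t\cos(2Tr + 2Mtr)$, which evaluate in closed form via \cite[3.896.4, 3.952.1]{G-R} to $\sqrt{\pi}\,\upbeta(Mr)\cos(2Tr)$ and $-\sqrt{\pi}\,Mr\,\upbeta(Mr)\sin(2Tr)$ respectively; the identity $\upbeta'(r) = -2r\upbeta(r)$ then assembles them into the function $g(r)$ of \eqref{3eq: defn of g(r)}. Writing $\cos(\cdot)$ as the real part of a complex exponential and factoring out the constant phase $\exp(2i(v+w))$ through $\cosh r = 1 + (\cosh r - 1)$, the residual phase becomes $2v\uprho_{+}(r) - 2w\uprho_{-}(r)$ with $\uprho_{\pm}$ exactly as in \eqref{4eq: rho (r)}. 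Finally, Gaussian decay of $\upbeta(Mr), \upbeta'(Mr)$ truncates the $r$-range from $T^{\vepsilon}$ down to $M^{\vepsilon}/M$ at negligible cost.

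The main obstacle I anticipate is the bookkeeping of signs and symmetrization: one must verify that the two Gaussian summands of $h(t)$ recombine into a single real-part statement rather than two separate oscillatory contributions, and that the three layers of trigonometric splitting (from $\cos(2t\log y)$, from $\cos(x\cosh(r+\log y))$, and from the $\pm T$ copies of $h$) collapse to the stated form with no spurious additive terms and with all tail contributions fitting into the $O_A(T^{-A})$ error.
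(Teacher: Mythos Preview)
Your proposal is correct and follows essentially the same route as the paper's proof: Mehler--Sonine, truncation of the $r$-integral, swapping order, removing $\tanh(\pi t)$, extending the $t$-integral to the full line, the changes $r \to r + \log y$ and $t \to T + Mt$, and the Gaussian evaluations via \cite[3.896.4, 3.952.1]{G-R}. Your anticipated obstacle with the two Gaussian summands dissolves because the symmetry reduction already restricts to $t > 0$, where $\upbeta((t+T)/M)$ is negligibly small, so only the single Gaussian $\upbeta((t-T)/M)$ survives and no recombination is needed.
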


\delete{In practice, 
\begin{align}
	x =  \frac {4\pi\sqrt{mn}} {c}, \qquad y = \sqrt{\frac m n}, 
\end{align}
and hence
\begin{align}\label{3eq: v & w}
	v =  \pi \frac {m } {c} , \qquad w =  \pi \frac {n} {c} .  
\end{align}
It is a pleasant  feature that $v$ and $w$ become separate inside the integral in \eqref{4eq: I}.  }

 
 \subsection{Stationary Phase Analysis for  {\protect\scalebox{1.06}{$ I (v, w) $}}} 
 
Finally,  we analyze the integral $I       (v, w)$ in \eqref{4eq: I}  by Lemma   \ref{lem: staionary phase}.

\begin{lem}\label{lem: analysis of I}
	Let   $T^{\vepsilon} \leqslant M \leqslant T^{1-\vepsilon}$.  
Then $I       (v, w) = O_A (T^{-A})$ if $v, w \Lt T$. 
\end{lem}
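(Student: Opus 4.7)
The plan is to apply the stationary-phase estimate of Lemma \ref{lem: staionary phase} after unfolding the oscillatory factor $g(r)$ of \eqref{3eq: defn of g(r)} into complex exponentials. First I would rewrite the phase using $\sinh r + \cosh r = e^{r}$ and $\sinh r - \cosh r = -e^{-r}$, which give $\uprho_+(r) = e^{r}-1$ and $\uprho_-(r) = 1-e^{-r}$, so that
\[
2\bigl(v\uprho_+(r) - w\uprho_-(r)\bigr) = 2\bigl(v e^{r} + w e^{-r}\bigr) - 2(v+w).
\]
Using $\cos (2 T r) = \tfrac 1 2 (e^{2iTr}+e^{-2iTr})$ and $\sin (2 T r) = \tfrac 1 {2i}(e^{2iTr}-e^{-2iTr})$, this reduces $I(v,w)$ to a bounded linear combination of integrals of the shape
\[
J_{\epsilon}(v,w) = MT \int_{-M^{\vepsilon}/M}^{M^{\vepsilon}/M} \omega(r)\, e^{i\phi_\epsilon(r)}\, \nd r, \qquad \phi_\epsilon(r) = 2\bigl(v e^{r} + w e^{-r}\bigr) + 2\epsilon T r,
\]
with $\epsilon\in\{+,-\}$ and $\omega(r)$ being either $\upbeta(Mr)$ or $(M/T)\upbeta'(Mr)$ (after absorbing the constant phase factor $e^{-2i(v+w)}$).

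Next I would verify the hypotheses of Lemma \ref{lem: staionary phase}. Since $\upbeta(s) = e^{-s^{2}}$ is negligibly small outside $|s| \leqslant M^{\vepsilon}$, the amplitude $\omega$ may be smoothly truncated to $|r| \leqslant M^{\vepsilon}/M$ at the cost of a negligibly small error, giving $\omega^{(j)}(r) \Lt M^{j}$, and hence $S=1$, $P=1/M$ (the factor $M/T\leqslant 1$ is harmless). For the phase,
\[
\phi_\epsilon'(r) = 2\bigl(v e^{r} - w e^{-r}\bigr) + 2\epsilon T, \qquad \phi_\epsilon^{(i)}(r) \Lt v + w \quad (i \geqslant 2),
\]
so that $Z = v+w$ and $Q = 1$. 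Provided the implicit constant in $v,w \Lt T$ is sufficiently small, on $|r| \leqslant M^{\vepsilon}/M = o(1)$ we obtain
\[
|\phi_\epsilon'(r)| \geqslant 2T - 2(v+w)\, e^{|r|} \Gt T,
\]
so that $R = T$ is admissible.

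Inserting these parameters into Lemma \ref{lem: staionary phase}, we obtain for any $A \geqslant 0$,
\[
J_\epsilon(v,w) \Lt_A MT \cdot \frac{M^{\vepsilon}}{M}\left(\frac{v+w}{T^{2}} + \frac{1}{T} + \frac{M}{T}\right)^A \Lt T^{1+\vepsilon}\,(M/T)^A,
\]
since $v+w \Lt T$ makes the first term dominated by the third. Because $M/T \leqslant T^{-\vepsilon}$, taking $A$ sufficiently large yields $J_\epsilon(v,w) = O_B(T^{-B})$ for any $B \geqslant 0$, and summing the $O(1)$ contributions gives the lemma. The main subtlety is ensuring the lower bound $|\phi_\epsilon'| \Gt T$, which forces the implicit constant in $v,w \Lt T$ to be taken small; beyond that the analysis is routine repeated integration by parts.
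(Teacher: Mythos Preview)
Your proof is correct and follows essentially the same route as the paper: split $g(r)$ into exponentials, identify the phases $f_{\pm}(r)=\pm Tr + v\uprho_{+}(r) - w\uprho_{-}(r)$, and apply Lemma~\ref{lem: staionary phase} with $P=1/M$, $Q=1$, $Z=v+w$, $R=T$. Your remark that the implied constant in $v,w\Lt T$ must be sufficiently small for $|\phi'_\epsilon|\Gt T$ to hold is accurate and is implicit in the paper's application (the lemma is used only to discard moduli $c$ larger than a fixed multiple of $N/T$).
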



\begin{proof}
		Define $$f_{\pm} (r) =  \pm T r + v \uprho_{+}  (r) - w \uprho_{-}  (r)  ,  $$ so that $ 2 f_{\pm} (r)$ are the phase functions of the integral $I(v, w) $ defined by \eqref{4eq: I}, \eqref{3eq: defn of g(r)}, and \eqref{4eq: rho (r)}.  By \eqref{4eq: rho (r)}, 
	\begin{align*}
		f_{\pm}' (r) =     \pm   T +     (v+w) \sinh r + (v-w) \cosh r   .
	\end{align*}
On the range $|r| \leqslant M^{\vepsilon}/ M$, we  have $ |f'_{\pm} (r)| \Gt T$ and $ f^{(i)}_{\pm} (r) \Lt v + w  $ for any $i \geqslant 2$. By applying Lemma \ref{lem: staionary phase}, with $P = 1/M$, $Q = 1$,  $Z = v + w$, and $ R = T$,  we infer that the integral  $I       (v, w) $ is negligibly small.   
	\end{proof}
  
		\delete{\begin{remark}\label{rem: M, T}
			Note that  $ T^{\vepsilon} \leqslant M \leqslant T^{1-\vepsilon} $ is crucial in the proof of Lemma \ref{lem: analysis of I}.   However, in Corollary \ref{cor: large sieve},  
				this assumption may be easily relaxed into $ 1 \leqslant M \leqslant T$ at the cost of only $T^{\vepsilon}$.  
		\end{remark}}

\section{The Special Twisted Large Sieve} \label{sec: large sieve}


Let $\mathcal{A} = \{a_{n} \}$ be a real sequence  supported on $  {N} <   n  \leqslant    {2N}  $.   
Define
\begin{align*}
	\| \mathcal{A} \|^{}   = \bigg(  \sum_{    n \sim    {N} } |a_{n}|^2 \bigg)^{1/2}. 
\end{align*}
Subsequently, we shall deal with  the smoothed spectral averages as in \eqref{1eq: S, cusp}	 and \eqref{1eq: E, Eis}:
\begin{align*}
	S (\mathcal{A})    =	   \sum_{j = 1 }^{\infty}   \omega_j { h  ( t_j ) }   \bigg| \sum_{ n  }  a_{n} \lambda_{j} (n) n^{i t_j}  \bigg|^2   ,   \qquad 
T (\mathcal{A})    =	   \frac 1 {\pi} 	\int_{-\infty}^{\infty}   \omega (t) {h (t)}  \bigg| \sum_{ n }  a_{n} \sigma_{2 i t} (n)   \bigg|^2  \nd t ,  
\end{align*}  
in which $h (t)$ is the spectral weight function defined as in \eqref{1eq: weight k}  or \eqref{3eq: defn h(t)}.

\subsection{Application of the Kuznetsov Trace Formula}

Choose the spectral weight in Lemma \ref{lem: Kuznetsov}  to be $h (t; \sqrt{m/n})$ as defined in \eqref{3eq: defn h(t)} and \eqref{3eq: defn h(t; y)}. Then multiply both sides of \eqref{2eq: Kuznetsov}  by ${a}_{m}   {a}_{n} $, and  sum over $  m, n \sim  {N} $.  Note that $$\mathrm{Re} \big( (m/n)^{i t} \big) = \cos \big(2t \log \sqrt{m/n} \big), $$
so the Kuznetsov formula \eqref{2eq: Kuznetsov}   in Lemma \ref{lem: Kuznetsov}  yields
\begin{align}\label{4eq; C = D + P}
	S (\mathcal{A})  + T (\mathcal{A})  = D (\mathcal{A}) + P (\mathcal{A}), 
\end{align}
with diagonal 
\begin{align}
	D (\mathcal{A}) = H \cdot \sum_{  n } |a_{n}|^2 , \qquad H   = \frac{1}{\pi^2}\displaystyle\int_{-\infty}^{\infty}h(t) \tanh(\pi t)t \nd t , 
\end{align} 
and off-diagonal 
\begin{align}
	P (\mathcal{A}) =  \sum_{c =1}^{\infty}   	\mathop{\sum\sum}_{m, n  }   {a}_{m}   {a}_{n} \frac{S (m, n;c)} {c} H \bigg( \frac {4\pi \sqrt{m n}} {c} , \sqrt{\frac {m} {n} } \bigg) . 
\end{align}
 \subsection{Proof of Theorem \ref{thm: asymptotic large sieve}} By a simple evaluation of $ H$, we have 
 \begin{align}\label{4eq: bound D(a)}
 	D (\mathcal{A}) =  \frac {2} {\pi \sqrt{\pi}} M T  (1 + O  (T^{-A}  )  ) \|\mathcal{A}\|^2 . 
 \end{align} By Lemmas \ref{lem: x<1}, \ref{lem: x>1}, and \ref{lem: analysis of I}, it follows that the Bessel $H$-integral may be transformed into $I$-integral by \eqref{4eq: H = I} while the $c$-sum may be truncated effectively at $ c \asymp N/ T $:
\begin{align}\label{4eq: P(a)} 
P (\mathcal{A})   =   \mathrm{Re}   \sum_{c \shskip \Lt N/T  }    \mathop{\sum   \sum}_{m, n  }    {a}_{m}   {a}_{n}   \frac {S (m, n;c)} { c } e \Big(     \frac {m    +    n} {c}    \Big)  I \Big(   \frac {\pi m } {c},   \frac {\pi n } {c} \Big)   +   O  \bigg(   \frac {N^{3/2+\vepsilon}} {T^{A}} \| \mathcal{A} \|^{2}   \bigg)   ,
\end{align}  
for any $A \geqslant 0$;  the error  is estimated trivially by \eqref{2eq: quad form, Kloosterman}.   By the identity \eqref{2eq: S = V}, we have
\begin{align}\label{4eq: P(a), 2} 
	P (\mathcal{A})  = \mathrm{Re}   \mathop{\sum\sum}_{c q \Lt N/T}      \mathop{\sum\sum}_{m, n  }   {a}_{m}   {a}_{n} \frac {V_{q} (m, n; c)} {c q}    I \Big(  \frac {\pi m } {cq},   \frac {\pi n} {cq} \Big) + O  \bigg( \frac {N^{3/2+\vepsilon}} {T^{A}} \| \mathcal{A} \|^{2} \bigg) .
\end{align}

\begin{lem}\label{lem: P(A)}
Let $P_{\natural} (\mathcal{A})  $ denote the quadruple sum in {\rm\eqref{4eq: P(a), 2}}. Then
	\begin{align}\label{4eq: bound Pn(a)}
		P_{\natural} (\mathcal{A}) \Lt M T \sum_{q \Lt N/T}  \frac 1 {q} \int_{-M^{\vepsilon}/ M}^{M^{\vepsilon}/ M} \sum_{c \Lt N/ T q} \frac 1 {c}  \, \sumx_{   \valpha      (\mathrm{mod} \, c) } \bigg|   \sum_{   n   }  a_{n}   e \Big(   \frac {\widebar{\valpha}     n} {c} \Big)  e \bigg( \frac {n t} {c q} \bigg) \bigg|^2  \nd t . 
	\end{align} 
\end{lem}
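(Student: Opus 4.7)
The plan is to substitute the integral representation \eqref{4eq: I} of $I(v, w)$ from Lemma \ref{lem: x>1} into the definition of $P_\natural(\mathcal{A})$ with $v = \pi m/cq$ and $w = \pi n/cq$. The crucial observation is that the phase factorises as
\[
\exp \bigl( 2i(v\, \uprho_+(r) - w\, \uprho_-(r)) \bigr) = e \Bigl( \frac{m\, \uprho_+(r)}{cq} \Bigr) \cdot e \Bigl( -\frac{n\, \uprho_-(r)}{cq} \Bigr),
\]
so the double sum over $(m, n)$ decouples into a product of two single sums, linked only through the character factor $V_q(m, n; c)$.

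To separate this character factor, I would use the bijection $\valpha \mapsto q - \valpha$ on the admissible residues $\{\valpha\,(\mathrm{mod}\, c) : (\valpha(q - \valpha), c) = 1\}$, which by \eqref{2eq: defn V} writes $V_q(m, n; c)$ as a sum over $\valpha$ of a product of an $m$-factor (depending on $\widebar{\valpha}$) and an $n$-factor (depending on $\widebar{q - \valpha}$). The quadruple sum inside $P_\natural(\mathcal{A})$ then becomes $\sum_\valpha \mathcal{S}^+_\valpha(r)\, \mathcal{S}^-_{q-\valpha}(r)$, where
\[
\mathcal{S}^{\pm}_\valpha(r) = \sum_{n \sim N} a_n\, e\Bigl( \frac{\widebar{\valpha}\, n}{c} \Bigr) \, e\Bigl( \pm \frac{n\, \uprho_\pm(r)}{cq} \Bigr) .
\]
Since $|g(r)| \Lt 1$ on $|r| \leqslant M^\vepsilon/M$ (clear from \eqref{3eq: defn of g(r)} since $|\upbeta|, |\upbeta'| \Lt 1$ and $M/T \Lt 1$), I would pass to absolute values, apply the Cauchy--Schwarz inequality in $\valpha$, and invoke the same bijection once more to obtain
\[
\Bigl| \sum_\valpha \mathcal{S}^+_\valpha\, \mathcal{S}^-_{q-\valpha} \Bigr| \leqslant \Bigl( \sum_\valpha |\mathcal{S}^+_\valpha|^2 \Bigr)^{1/2} \Bigl( \sum_\valpha |\mathcal{S}^-_\valpha|^2 \Bigr)^{1/2};
\]
the AM--GM inequality then replaces the geometric mean by an arithmetic mean, splitting the bound into two terms of matching shape.

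The concluding step is a change of variable: $t = \uprho_+(r) = e^r - 1$ in the $+$ integral and $t = -\uprho_-(r) = e^{-r} - 1$ in the $-$ integral. Both Jacobians $\nd t/\nd r = e^{\pm r}$ equal $1 + O(M^{\vepsilon - 1})$ on $|r| \leqslant M^\vepsilon/M$, and the image $t$-range sits inside $|t| \Lt M^\vepsilon/M$, which may be absorbed by a marginal inflation of $\vepsilon$. After relaxing the constraint $(\valpha(q - \valpha), c) = 1$ to $(\valpha, c) = 1$ by positivity, each of the two resulting expressions matches the right-hand side of \eqref{4eq: bound Pn(a)}, with the range $c \Lt N/(Tq)$ inherited directly from the outer range $cq \Lt N/T$ in \eqref{4eq: P(a), 2}. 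The main obstacle is bookkeeping: ensuring that the two contributions produced by Cauchy--Schwarz combined with the independent substitutions $t = \uprho_+(r)$ and $t = -\uprho_-(r)$ fold cleanly into the single claimed integral, with no parasitic factors beyond $M^{\vepsilon}$ left over. No arithmetic cancellation beyond the separation of variables afforded by Lemma \ref{lem: x>1} is exploited at this stage.
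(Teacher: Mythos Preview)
Your proposal is correct and follows essentially the same route as the paper: substitute the integral representation \eqref{4eq: I}, use the definition of $V_q$ to factor the $(m,n)$-sum into a product $\mathcal{S}^+_\valpha \mathcal{S}^-_{q-\valpha}$, apply AM--GM (the paper does this termwise rather than via Cauchy--Schwarz first, but the effect is identical), relabel $q-\valpha \to \valpha$ and drop the extra coprimality by positivity, then change variables $t = \pm\uprho_\pm(r)$ with Jacobian $1 + O(M^{\vepsilon}/M)$. The only cosmetic difference is that the bijection $\valpha \mapsto q-\valpha$ is not needed to read off the product structure of $V_q$ from \eqref{2eq: defn V}---that is already manifest---but you correctly invoke it where it matters, namely to recast $\sum_\valpha |\mathcal{S}^-_{q-\valpha}|^2$ as $\sum_\valpha |\mathcal{S}^-_\valpha|^2$.
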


\begin{proof}
	By the  integral expression  in  \eqref{4eq: I}, $P_{\natural} (\mathcal{A}) $ is expanded into 
\begin{align*} 
\begin{aligned}
	P_{\natural} (\mathcal{A}) =	    M T     \int_{-M^{\vepsilon}/ M}^{M^{\vepsilon}/ M}  g (r)    \mathop{\sum\sum}_{c q \Lt N/T}  \mathop{\sum    \sum}_{m, n  }   {a}_{m}   {a}_{n}     \frac {V_{q} (m,    n; c)} {c q} e   \Big( \frac {m} {c q} \uprho_{+}  (r)     -     \frac {n} {c q} \uprho_{-}  (r)   \Big)   \nd r . 
\end{aligned}
\end{align*}
Next, we insert the definition of $V_{q} (m, n; c)$ as in \eqref{2eq: defn V} to split the $m$- and $n$-sums, so 
\begin{align*}
	P_{\natural} (\mathcal{A}) =  M T  \int_{-M^{\vepsilon}/ M}^{M^{\vepsilon}/ M}   g (r)   \mathop{\sum\sum}_{c q \Lt N/T} \frac 1 {cq}   \bigg(   \mathop{\sum_{\valpha     (\mathrm{mod}\, c)}}_{ (\valpha     (q-\valpha    ), c) = 1 }    I_{\valpha    }^{+} (r; c, q; {\mathcal{A}})    {I_{q - \valpha    }^{-} (r; c, q;  {\mathcal{A}} )}   \bigg) \nd r , 
\end{align*}
in which
\begin{align*} 
	I_{\valpha    }^{\pm} (r; c, q; \mathcal{A}) = \sum_{n} a_n   e \Big(    \frac {  \widebar{\valpha    } n } {c} \Big) e \Big(  \hspace{-2pt} \pm \frac {n} {c q} \uprho_{\pm} (r) \Big). 
\end{align*}
Now we bound $P_{\natural} (\mathcal{A})$ in the trivial manner and apply the AM--GM inequality to the inner $I$-product, so that the $\valpha$-sum splits into 
\begin{align*}
 \mathop{\sum_{\valpha     (\mathrm{mod}\, c)}}_{ (\valpha     (q-\valpha    ), c) = 1 }  \big|  I_{\valpha    }^{+} (r; c, q; {\mathcal{A}})  \big|^2 +  \mathop{\sum_{\valpha     (\mathrm{mod}\, c)}}_{ (\valpha     (q-\valpha    ), c) = 1 } \big|  {I_{ q - \valpha    }^{-} (r; c, q;  {\mathcal{A}} )}\big|^2 .
\end{align*}
By the change $ q - \valpha  \ra \valpha$ in the second  sum and then the omission of the coprimality condition $(q-\valpha, c) = 1$ in both sums, this is further bounded by 
\begin{align*}
	 {\sumx_{\valpha     (\mathrm{mod}\, c)}}  \lp  \big|  I_{\valpha    }^{+} (r; c, q; {\mathcal{A}})  \big|^2 +   \big|  {I_{  \valpha    }^{-} (r; c, q;  {\mathcal{A}} )}\big|^2 \rp .
\end{align*} 
It follows that 
\begin{align*} 
	P_{\natural} (\mathcal{A})  \Lt M T \sum_{q \Lt N/T}  \frac 1 {q}  \sum_{\pm}  \int_{-M^{\vepsilon}/ M}^{M^{\vepsilon}/ M}     \sum_{c   \Lt N/  T q} \frac 1 {c}   \,     {\sumx_{\valpha     (\mathrm{mod}\, c)}}    \bigg|  \sum_{n} a_n   e \Big(   \frac {  \widebar{\valpha    } n } {c} \Big) e \Big( \hspace{-2pt} \pm \frac {n} {c q} \uprho_{\pm} (r) \Big) \bigg|^2 \nd r . 
\end{align*}
Finally, since $$ \uprho_{\pm}' (r) = \cosh r \pm \sinh r = 1 + O (M^{\vepsilon}/ M) $$ on the integral domain, the change of variable $t = \pm \uprho_{\pm} (r)$ yields \eqref{4eq: bound Pn(a)}; here we may enlarge the resulting integral domain slightly by positivity and adjust $\vepsilon$ by our $\vepsilon$-convention. 
\end{proof}

By combining \eqref{4eq; C = D + P},  \eqref{4eq: bound D(a)}, \eqref{4eq: P(a), 2}, and \eqref{4eq: bound Pn(a)}, we obtain   Theorem \ref{thm: asymptotic large sieve} (with slight abuse of notation, the negligible error from $P (\mathcal{A})$ has been absorbed into $D (\mathcal{A})$). 

\subsection{Proof of Corollary \ref{cor: large sieve}}   
By   the large sieve of Young  as in Lemma  \ref{lem: Young's LS} with $\gamma = 1$,  $\tau = M^{\vepsilon}/ M$,  $v = q$, and $C = O (N / q T)$  to the expression in \eqref{4eq: bound Pn(a)}, we have  
\begin{align*}
	P_{\natural} (\mathcal{A}) \Lt   M T \sum_{q \Lt N/T}  \frac 1 {q}  \bigg(  \frac {M^{\vepsilon}} {M} \frac {N} {q T}  + q \log N \bigg)     \|  \mathcal{A} \|^{2} \Lt   M N   (TN)^{\vepsilon} \|  \mathcal{A} \|^{2},  
\end{align*}
and, in view of  \eqref{4eq: P(a), 2}, 
\begin{align}\label{4eq: bound P(a)}
	P (\mathcal{A}) \Lt \lp  M N + \frac {N^{3/2}} {T^{A}}  \rp (TN)^{\vepsilon}  \| \mathcal{A} \|^{2} . 
\end{align}

It follows from \eqref{4eq; C = D + P}, \eqref{4eq: bound D(a)}, and \eqref{4eq: bound P(a)} that
\begin{align*}
	S (\mathcal{A})  + T (\mathcal{A})  \Lt \lp M T + M N + \frac {N^{3/2}} {T^{A}}  \rp (TN)^{\vepsilon}  \| \mathcal{A} \|^{2}, 
\end{align*}
and hence
\begin{align*} 
	 	\sum_{ T - M \leqslant t_j \leqslant T+M }  \omega_j  \bigg| \sum_{     n  }  a_{n} \lambda_{j} (n) n^{it_j}  \bigg|^2  \Lt    	\lp M T + M N + \frac {N^{3/2}} {T^{A}}  \rp (TN)^{\vepsilon}    \sum_{   n   } |a_{n}|^2 ;
\end{align*}
its validity may be easily extended to $1 \leqslant M \leqslant T$. 
 Finally, to finish the proof, we enlarge $M \ra M + N^{\vepsilon}$ and $T \ra T + N^{\vepsilon}$ simultaneously,  and choose $A = 3/2\vepsilon$ to absorb $N^{3/2}/ T^{A}$ into $M T$. 

\section{Mean Lindel\"of Hypothesis: Proof of Theorem \ref{thm G:(3)}}

For $h (t)$ as in \eqref{1eq: weight k}  or  \eqref{3eq: defn h(t)}, consider 
\begin{align}
	\label{5eq: S(T, M)}  S_{\phi}   & =  \sum_{j = 1 }^{\infty}   \omega_j { h  ( t_j ) }   \big| L (1/2+it_j, \phi \times u_j)  \big|^2,  \\
	\label{5eq: E (T, M)}   T_{\phi}   &= \frac 1 {\pi} 	\int_{-\infty}^{\infty}   \omega (t) {h (t)}  \big|L (1/2+2it , \phi) L (1/2 , \phi)  \big|^2  \nd t. 
\end{align}
Our aim is to prove 
\begin{align} 
	\label{5eq: main bound} S_{\phi} + T_{\phi} \Lt_{\vepsilon, \phi}  M T^{1+\vepsilon} + \frac {T^{5/2+\vepsilon}} {M^2} , 
\end{align}
so that \eqref{1eq: mean Lindelof, GL(3)}    follows for all $ \sqrt{T} \leqslant M \leqslant T$ as in Theorem \ref{thm G:(3)}.

\subsection{Sketch} \label{sec: sketch}

For simplicity, in this sketch of proof, we shall omit the factor $ T^{\vepsilon}$. 

Essentially, we need to work with the sum
\begin{align*}
S(N) + T(N) = \hspace{-1pt}   \sum    \omega_j { h  ( t_j ) }   \bigg| \hspace{-1pt}  \sum_{n\sim N}  \frac {A(1        , n) \lambda_j (n)}{n^{1/2 + i t_j} }    \bigg|^2 \hspace{-2pt}  + \frac 1 {\pi} \hspace{-1pt} 	\int \hspace{-1pt}   \omega (t) {h (t)}  \bigg| \hspace{-1pt}  \sum_{n \sim N} \frac {{A(1        , n)} \sigma_{2 i t} (n)}  {\sqrt{n}}     \bigg|^2 \nd t,
\end{align*}
for the length $N \approx T^{3/2  }$. By Theorem \ref{thm: asymptotic large sieve} and Remark \ref{rem: real assump}, we arrive at the diagonal
\begin{align*}
	\breve{D} (N) = M T \sum_{n \sim N} \frac {|A(1, n)|^2} {n}  \Lt     M T , 
\end{align*}
by the Rankin--Selberg estimate  \eqref{2eq: RS}, and the off-diagonal
\begin{align*}
\breve{P} (N)	=   {M T}   \sum_{q \Lt N/ T}    \int_{-1/ M}^{1/ M}  \sum_{c \Lt N/ T q} \frac 1 {c q}  \, \sumx_{   \valpha      (\mathrm{mod} \, c) } \bigg|  \sum_{ n \sim N }  \frac{A(1, n)} {\sqrt{n} }  e \Big(   \frac {\widebar{\valpha}     n} {c} \Big)  e \bigg( \frac {n t} {c q} \bigg)  \bigg|^2   \nd t . 
\end{align*}

The Vorono\"i summation formula in Lemma \ref{lem: Voronoi} and stationary phase analysis in Lemma \ref{lem: analysis of integral} yield roughly the dual expression:
\begin{align*}
	\frac{  T  } { \sqrt{M}  }   \sum_{q \shskip \Lt N^{2/3} / M }    \int_{-\sqrt{M}}^{ \sqrt{M} }   \sum_{c \Lt N / T q}   \frac {1} {c^2 q}      \sum_{   \valpha      (\mathrm{mod} \, c) }       \bigg|  \sum_{n \shskip \asymp  {N^2  } / { M^3 q^3}}   \frac { A(n , 1) }  {\sqrt{n}}    {S (   n,  {\valpha}   ;c   )}   e \bigg(    \frac {2   \sqrt{qn} t} {c  }  \bigg)   \bigg|^2     { \nd t }.  
\end{align*}
Note here that in order for the $n$-sum not to be void the $q$-sum is forced to be shortened into $ q \Lt N^{2/3} / M $ (actually, it will be those small $q$ that make the main contribution) and the coprimality condition $(\valpha, c) = 1$ is dropped after the Vorono\"i summation formula (by non-negativity). 

Next, we open the square and calculate the exponential sum to transform the expression above into 
\begin{align*}
	\frac{  T  } { \sqrt{M}  }  \sum_{q \shskip \Lt N^{2/3} / M }    \int_{-\sqrt{M}}^{ \sqrt{M} }   \sum_{c \Lt N / T q}   \frac {1} {c q} \,     \sumx_{   \beta      (\mathrm{mod} \, c) }       \bigg|  \sum_{n \shskip \asymp  {N^2  } / { M^3 q^3}}   \frac { A(n , 1) } {\sqrt{n}}   e \bigg(   \frac {\widebar{\beta}     n} {c} \bigg)  e \bigg(    \frac {2   \sqrt{qn} t} {c  }  \bigg)   \bigg|^2     { \nd t }.  
\end{align*}

Finally, by an application of Young's hybrid large sieve \eqref{2eq: hybrid ls, Young, 2} and the Rankin--Selberg estimate \eqref{2eq: RS}, we have the bound 
\begin{align*}
\breve{P} (N) \Lt	\frac {T} {\sqrt{M}} \sum_{ q \shskip \Lt N^{2/3} / M } \frac 1 {q} \bigg(\sqrt{M} \frac {N} {T q} + \frac {N} {M^{3/2} q^2}  \bigg) \Lt N + \frac {T N} {M^2} \Lt T^{3/2} + \frac {T^{5/2}} {M^2} ,
\end{align*} 
and in conclusion 
\begin{align*}
	S(N) + T(N)  \Lt M T + T^{3/2} + \frac {T^{5/2}} {M^2} \Lt M T +  \frac {T^{5/2}} {M^2}. 
\end{align*}

\subsection{Initial Reductions}  
Let us assume  $ |t_j - T | \leqslant M^{1+\vepsilon}$ as otherwise $h (t_j)$ is exponentially small.      
By \cite[Theorem 5.3]{IK}, it follows from \eqref{2eq: defn L(s)}--\eqref{2eq: FE, GL(3) x GL(2)} the approximate functional equation:  
\begin{align*}
	\begin{aligned}
		L  (  1 / 2 + i t_j , \phi \times u_j  )      = &   \mathop{\sum \sum}_{ n_1  , n     }  \frac{A(n_1,n    )\lambda_j (n    )}{(n_1^2 n    )^{1/2+it_j} }   V_{\delta_j}    \big( n_1^2 n    ; 1/2+it_j \big)   \\
		& +  \epsilon_j (\phi)    \mathop{\sum \sum}_{ n_1, n     }  \frac{\overline{A(n_1,n    )}\lambda_j (n    )}{(n_1^2 n    )^{1/2-it_j} }   \widetilde{V}_{\delta_j}    \big( n_1^2 n    ; 1/2-it_j \big) ,  
	\end{aligned}
\end{align*}
where $ \epsilon_j (\phi) $ has unity norm,
\begin{align*}
	V_{\delta} (y; 1/2+it) =  \frac 1 {2   \pi i }   \int_{ (3)}   & G_{\delta}  (v, it;  \phi )   y^{ - v}   \frac {\nd v} {v}, 
\end{align*}
\begin{align*}
	G_{\delta}  (v, it; \phi   ) =  
	\frac { \gamma (1/2 + 2i t + \delta   + v, \phi  )} {  \gamma (1/2 + 2i t  + \delta  , \phi  )} \frac {  \gamma (1/2 + \delta    + v, \phi  )} {  \gamma (1/2 + \delta    , \phi  )} \exp({v^2}) ,
\end{align*}  
with $\gamma (s, \phi)$ defined as in \eqref{2eq: gamma (s, phi)} (see also \eqref{2eq: gamma (s, phi uj)}), while  $ \widetilde{V}_{\delta }    ( y ; 1/2-it) $ is similarly defined with $\phi \ra \widetilde{\phi}$.  By \cite[Proposition 5.4]{IK}, one may effectively restrict the sums above to the range  $ n_1^2 n     \leqslant T^{3/2+\vepsilon} $ at the cost of a negligibly small error. 
In order to facilitate our analysis, we use  the following expression  due to Blomer \cite[Lemma 1]{Blomer}  (slightly modified): 
\begin{align*}
	V_{\delta} (y; 1/2+it)  =  \frac 1 {2   \pi i }   \int_{ \vepsilon - i U}^{\vepsilon + i U}   G_{\delta}  (v, it;  \phi )     y^{ - v}   \frac {\nd v} {v} + {O_{\vepsilon}} \bigg( \frac { T^{  \vepsilon} } {y^{ \vepsilon} \exp ({U^2/2}) } \bigg) . 
\end{align*}
The error term above is negligibly small if we choose $ U = \log T$. Note that  for any $v$ on the integral contour, 
\begin{align*}
	G_{\delta}  (v, it; \phi   )  = O_{\vepsilon, \phi}  ( T^{\vepsilon} )  , 
\end{align*} 
by the Stirling  formula, provided that $ ||t| - T|\leqslant M^{1+\vepsilon}$.  

By a smooth dyadic partition and the Cauchy--Schwarz  inequality, we infer that up to a negligible error 
\begin{align}\label{2eq: AFE} 
	| L  (  1 / 2 + i t_j , \phi \times u_j  )   |^2 \Lt  {T^{\vepsilon}}     \max_{P \shskip \leqslant  T^{3/2+\vepsilon}}   \int_{    \vepsilon - i  \log T}^{\vepsilon + i \log T}   \big| S_j^v (P)  \big|^2     \nd v ,
\end{align}
where   $P$ are dyadic in the form $ 2^{k/2}  $ ($k \geqslant -1$),  and  
\begin{align}
	S_j^v (P)  = \frac 1 {\sqrt{P}} \mathop{\sum\sum}_{ n_1,  n     }  \frac{A(n_1,n    )\lambda_j (n    )} {(n_1^2 n    )^{ i t_j} } \varww_{ v} \bigg(\frac {n_1^2 n    } {P} \bigg), \quad \ \	\varww_{ v} (x) =   \frac {\varvv (x)}   {x^{1/2+ v} }  , 
\end{align} 
for a certain fixed $\varvv \in C_c^{\infty} [1 , {2}]$. Note that $\varww_{ v} (x)$ is $\log T$-inert according to Definition \ref{defn: inert}; namely $ \varww_v^{(i)} (x) \Lt_{\vepsilon, i} \log^i T  $. Further, by the Cauchy--Schwarz inequality, we have
\begin{align}\label{5eq: S (N, uj)}
	\big|S_j^v (P)\big|^2  \Lt T^{\vepsilon}  \sum_{n_1 \Lt \sqrt{  P}} \frac 1 {n_1} \bigg|\frac {n_1} {\sqrt{P}} \sum_{n}  {A(n_1    , n)\lambda_j (n)}{n^{- i t_j} } \varww_{ v} \bigg(\frac {n} {P /n_1^2}  \bigg)\bigg|^2.  
\end{align} 

Moreover,  if $ \lambda_j (n) $ were replaced by $ n^{ it} {\sigma_{-2 i t} (n)}$, then the arguments above apply  in parallel to the (Eisenstein) case of $  |L (1/2+2it , \phi) L (1/2 , \phi)   |^2 $. Similar to \eqref{5eq: S (N, uj)}, the final expression that we need to consider reads:   
\begin{align}
\sum_{n_1 \Lt \sqrt{ P}} \frac 1 {n_1} \bigg|	 \frac {n_1} {\sqrt{P}} \sum_{n}   A(n_1    , n) \sigma_{-2 i t} (n)  \varww_{ v} \bigg(\frac {n} {P/n_1^2}  \bigg) \bigg|^2. 
\end{align}

\delete{Moreover,  the arguments above apply  in parallel for the Eisenstein case, with $ \lambda_j (n) $ replaced by $ n^{it} \sigma_{2 i t} (n)$. For instance, we have 
	\begin{align}
		\begin{aligned}
			L (1/2+2it, \phi) L (1/2, \phi) &  =   \mathop{\sum \sum}_{ n_1, n     }  \frac{A(n_1,n    ) \sigma_{2it} (n    )}{n_1^{1+2it} n    ^{1/2}  }   V_{0}    ( n_1^2 n    ; 1/2+it)   \\
			& +  \epsilon (t, \phi)    \mathop{\sum \sum}_{ n_1, n     }  \frac{\overline{A(n_1,n    )} \sigma_{-2it} (n    )}{ n_1^{1-2it} n    ^{1/2} }   \widetilde{V}_{0}    ( n_1^2 n    ; 1/2-it) ,   
		\end{aligned}
\end{align}}


\begin{lem}\label{lem: S + T}
	  For $  N  n_1^2 \Lt T^{3/2+\vepsilon} $,  define
	  \begin{align}
	  	S  (n_1; N)  & = \sum_{j = 1 }^{\infty}   \omega_j { h  ( t_j ) }   \bigg|\frac { 1    } {\sqrt{N}} \sum_{n}  {A(n_1    , n)\lambda_j (n)}{n^{- i t_j} } \varww  \Big(\frac {n} {N}  \Big) \bigg|^2, \\
	  	T  (n_1; N)  & =  \frac 1 {\pi} 	\int_{-\infty}^{\infty}   \omega (t) {h (t)}  \bigg|\frac { 1    } {\sqrt{N}} \sum_{n}  {A(n_1    , n) \sigma_{-2 i t} (n)}  \varww  \Big(\frac {n} {N}  \Big) \bigg|^2 \nd t , 
	  \end{align}
  where $ \varww \in C_c^{\infty} [1, 2]$ is $\log T$-inert in the sense of Definition {\rm\ref{defn: inert}}.  
  Then 
  \begin{align}\label{5eq: bound for S+T}
 S  (n_1; N) + T  (n_1; N) \Lt   \bigg(1 + \frac {M T   } {N}\bigg) T^{\vepsilon}   \sum_{ n \sim N}   |A (n_1, n)|^2   + \bigg( n_1 + \frac {T} {M^2  }  \bigg)   {N n_1 T^{ \vepsilon}} . 
  \end{align}
\end{lem}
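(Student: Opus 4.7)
The plan is to apply the asymptotic large sieve of Theorem \ref{thm: asymptotic large sieve} (together with Remark \ref{rem: real assump}) to the sequence $a_n = A(n_1, n)\varww(n/N)/\sqrt{N}$ supported on $n \sim N$, which yields
\begin{align*}
S (n_1; N) + T (n_1; N) \Lt \breve{D}(\mathcal{A}) + \breve{P}(\mathcal{A}).
\end{align*}
The diagonal is immediate:
\begin{align*}
\breve{D}(\mathcal{A}) = MT \sum_n |a_n|^2 \Lt \frac{MT}{N} \sum_{n \sim N} |A(n_1, n)|^2,
\end{align*}
which gives the $(MT/N) \sum |A(n_1, n)|^2$ portion of the first term of \eqref{5eq: bound for S+T}.

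Next I would attack the off-diagonal $\breve{P}(\mathcal{A})$ in \eqref{1eq: bound P(a), 2}, following the blueprint sketched in \S \ref{sec: sketch}. First, apply the $\SL_3(\BZ)$ Vorono\"i summation formula (Lemma \ref{lem: Voronoi}) to the inner exponential sum with test function $\omega (x) = \varww (x/N) e(xt/cq)/\sqrt{x}$ (with the $\GL_3$ shift equal to $n_1$), producing the dual expansion
\begin{align*}
\sum_{\pm} \sum_{d \mid c n_1} \frac{d}{c^2 n_1} \sum_{m \geqslant 1} A(m, d)\, S(\pm m, \valpha n_1; c n_1/d)\, \Omega^{\pm}\!\bigg( \hspace{-2pt} \mp \frac{d^2 m}{c^3 n_1} \bigg).
\end{align*}
Inserting the Bessel-kernel asymptotics \eqref{4eq: asymptotic, Bessel, R} and invoking stationary phase (Lemma \ref{lem: analysis of integral}) localizes the dual $m$-sum to $m \asymp N^2 n_1^3/(d^2 M^3 q^3)$, and shows that the contribution of $q d^{2/3} \Gt N^{2/3} n_1 / M$ is negligibly small.

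After opening the square in $\breve{P}(\mathcal{A})$ and executing the $\valpha \pmod c$ sum, the product of Kloosterman sums collapses to a single exponential sum, and $\breve{P}(\mathcal{A})$ reorganizes into the Young large-sieve shape
\begin{align*}
\frac{T}{\sqrt{M}} \sum_{q, d} \int_{-\sqrt{M}}^{\sqrt{M}} \sum_{c} \frac{1}{cq}\, \sumx_{\beta(\mathrm{mod}\, c)} \bigg| \sum_m \frac{A(m, d)}{\sqrt{m}}\, e\bigg(\frac{\widebar{\beta} m}{c}\bigg) e\bigg(\frac{2 \sqrt{q m}\, t}{c}\bigg) \bigg|^2 \nd t,
\end{align*}
with the $m$-sum localized at $m \asymp N^2 n_1^3/(d^2 M^3 q^3)$. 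Applying Lemma \ref{lem: Young's LS} with $\gamma = 1/2$ and suitable choices of $\tau, v, C$, then invoking the two-variable Rankin--Selberg bound \eqref{2eq: RS, 2} on $\sum_m |A(m, d)|^2$ and summing over $d$ and $q \Lt N^{2/3} n_1/ M$, should deliver the $(n_1 + T/M^2) N n_1 T^{\vepsilon}$ contribution in \eqref{5eq: bound for S+T}.

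The main obstacle will be the careful bookkeeping of $n_1$ throughout the Vorono\"i step: the divisor condition $d \mid c n_1$ couples $d$ with both the modulus $c$ and the fixed parameter $n_1$, and the resulting Kloosterman sum $S(\pm m, \valpha n_1; c n_1 / d)$ lives on the varying modulus $c n_1/d$ rather than $c$. A dyadic decomposition in $d$, combined with the twisted multiplicativity of Kloosterman sums to factor $c n_1/d$ into pieces coprime to $n_1$ and pieces dividing $n_1$, will be needed to reduce the exponential sum back to modulus $c$ (with a slightly modified shift) before Young's large sieve can be applied cleanly. A secondary challenge is that \eqref{2eq: RS, 2} is only tight after averaging over $d$, so one must retain the $d$-average inside the Young step rather than separating it out, which is what forces the final bound to be of the form $(n_1 + T/M^2) N n_1$ rather than something sharper for each individual $d$.
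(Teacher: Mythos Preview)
Your overall plan matches the paper's proof, but two points deserve comment.

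First, you omit a necessary preliminary: before applying Vorono\"i, the paper truncates the $t$-integral at $|t| = 1/(MN)$ and the $q$-sum at $q = T^{\vepsilon}$, handling the discarded ranges trivially or via Lemma~\ref{lem: Young's LS}. This truncation supplies the ``$1$'' portion of the factor $(1 + MT/N)$ in \eqref{5eq: bound for S+T}, and more importantly is what makes the Bessel asymptotic \eqref{4eq: asymptotic, Bessel, R} applicable: one needs $|Ny| \Gt T^{\vepsilon}$, and the paper secures this via $|Ny| \Gt T^3 q^3/(N^2 n_1) \Gt q^3 n_1^3/T^{\vepsilon}$ once $q > T^{\vepsilon}$. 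Without this restriction your Vorono\"i/stationary-phase step is not justified for small $q$.

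Second, your proposed handling of the modulus $cn_1/d$ via twisted multiplicativity of Kloosterman sums is more elaborate than what the paper actually does. The paper simply drops the condition $(\valpha, c) = 1$ by positivity, opens both Kloosterman sums, and executes the \emph{full} $\valpha$-sum modulo $c$: this yields the congruence $\beta \equiv \gamma \pmod{c/(c,d)}$, after which AM--GM and the substitution $h = cn_1/d$ put everything into Young's large-sieve shape with modulus $h$ (not $c$). No factorization of the Kloosterman modulus is ever needed. Also note two slips: your localization should read $m \asymp N^2 n_1/(d^2 M^3 q^3)$, not $n_1^3$, and correspondingly the $q$-truncation is $q \Lt N^{2/3} n_1^{1/3}/M$; and the final summation uses the sharper Rankin--Selberg bound \eqref{2eq: RS} (summing over $d^2 n$) rather than \eqref{2eq: RS, 2}.
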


By the discussion above, the estimate in \eqref{5eq: main bound} readily follows from Lemma \ref{lem: S + T}. Given \eqref{5eq: bound for S+T}, for $P \leqslant T^{3/2+\vepsilon}$, by the Rankin--Selberg estimate \eqref{2eq: RS, 2}, we have
\begin{align*}
	 \sum_{ n_1 \Lt \sqrt{P} } \frac {(S+T) (n_1; P/n_1^2) } {n_1}  \Lt T^{\vepsilon}  \bigg(M T + T^{3/2} + \frac {T^{5/2} } {M^2} \bigg) \Lt T^{\vepsilon} \bigg(M T +   \frac {T^{5/2} } {M^2} \bigg) . 
\end{align*} 

\subsection{Application of Theorem \ref{thm: asymptotic large sieve}} 

Set  
\begin{align*}
	\overline{a}_n  = \frac {1 } {\sqrt{N}}  A(n_1, n) \varww  \Big(\frac {n} {N}  \Big),
\end{align*} 
so that Theorem \ref{thm: asymptotic large sieve} and Remark \ref{rem: real assump} yield 
\begin{align}
	 S  (n_1; N) + T  (n_1; N) \Lt \breve{D}  (n_1; N) + \breve{P}  (n_1; N) , 
\end{align}
 where 
 \begin{align}
 	\breve{D}  (n_1; N) = \frac {M T   } {N}   \sum_{  n \sim  N} |A (n_1, n)|^2  , 
 \end{align}
\begin{align} \label{5eq: P(N)}
		\breve{P}  (n_1; N)  = \frac{M T} {N}  \sum_{q \Lt N/T}  \frac 1 {q} \int_{-M^{\vepsilon}/ M}^{M^{\vepsilon}/ M}  \sum_{c \Lt N/ T q} \frac 1 {c}  \, \sumx_{   \valpha      (\mathrm{mod} \, c) } \big|  P_{\valpha} (t/q;  c, n_1; N)  \big|^2   \nd t ,  
\end{align} 
with
\begin{align}\label{5eq: P (t/q)}
P_{\valpha} (t/q;  c, n_1; N)  =	 \sum_{ n }  A(n_1, n)   e \Big(   \frac {\widebar{\valpha}     n} {c} \Big)  e \bigg( \frac {n t} {c q} \bigg) \varww  \Big(\frac {n} {N}  \Big) . 
\end{align}
Further, in order to facilitate our analysis, we truncate  the $t$-integral at $ |t| = 1/ M N $ say and the $q$-sum at $ q = T^{\vepsilon}$, and then apply a dyadic partition for $ 1/MN \leqslant |t| \leqslant M^{\vepsilon} / M $; the resulting error is satisfactory: 
 \begin{align*}
 	 O \bigg( \frac {N + MT} {N} T^{\vepsilon} \sum_{  n \sim N} |A (n_1, n)|^2    \bigg), 
 \end{align*} 
by  trivial estimation or by Young's hybrid large sieve in Lemma \ref{lem: Young's LS}.  \delete{Accordingly, we write
\begin{align}
	P (n_1; N) \Lt \sum_{1/M N \Lt \tau \Lt M^{\vepsilon} / M }   P^{\tau} (n_1; N) + \frac {MT + N} {N} T^{\vepsilon} \sum_{  n \sim N} |A (m, n)|^2, 
\end{align}
where $ \tau  $ is dyadic in the form $1 / 2^{k}$. }

\subsection{Application of the Vorono\"i Summation Formula}  
Subsequently,  let $q > T^{\vepsilon}$, $|t| \sim \tau$ for  $ \tau \Lt M^{\vepsilon} / M$.  By applying the Vorono\"i summation formula in Lemma \ref{lem: Voronoi}, the sum $P_{\valpha} (t/q;  c, n_1; N) $ in \eqref{5eq: P (t/q)} is transformed into 
\begin{align}\label{5eq: after Voronoi}
P_{\valpha} (t/q;  c, n_1; N) =	   \sum_{\pm}     \sum_{d\mid c n_1   }    d      \sum_{n  }   A(n , d)    \frac {S\left(\pm n,  {\valpha}n_1;c n_1   /d\right)} {c^2 n_1   }   \Omega_N \bigg( \hspace{-2pt} \mp \frac{d^2 n} {c^3 n_1   }; \frac {t} {cq} \bigg)  , 
\end{align}
where according to \eqref{2eq: Hankel}
\begin{align}
	\Omega_N ( y; r )  = \int J_{\phi} ( -  x y) e  (   x r  ) \varww  \Big(\frac {x} {N}  \Big) \nd x  . 
\end{align} 

\subsection{Analysis for the Hankel Transform} 
For  $  | N y| \Gt T^{\vepsilon}$, it is permissible to use the asymptotic expansion for $J_{\phi} ( - x y)$ with a negligibly small error term (choose $K = \lfloor 3 A /\vepsilon \rfloor$ + 1, say) as in \eqref{4eq: asymptotic, Bessel, R}.  It follows that 
\begin{align}
	\Omega_N ( y; r ) = \frac 1 {\sqrt[3]{  y}}  \int   e \big( x r -  {3 \sqrt[3]{x y} }  \big) \varww_{\phi}  \Big(\frac {x} {N}  \Big) \frac {\nd x } { \sqrt[3]{x} } + O  (T^{-A} ),
\end{align}
for some $\log T$-inert function  $ \varww_{\phi}  \in C_c^{\infty} [1, 2] $.

For $y, r > 0$, we make the change $  x  \ra x \sqrt{y/ r^3}  $ so that 
\begin{align*} 
	\Omega_N (  y; r ) = \frac 1 {r}  \int   e \big( \sqrt{y/ r}  (x -  {3 \sqrt[3]{x } } )  \big) \varww_{\phi}  \bigg(\frac {x   } {N \sqrt{r^3 / y}}  \bigg) \frac {\nd x } { \sqrt[3]{x} } + O  (T^{-A} ). 
\end{align*}
By applying Lemma \ref{lem: analysis of integral} with $\lambda = \sqrt{y/ r}$, $ \rho = N \sqrt{r^3 / y} $, and $X = \log T$, we infer that $\Omega_N (  y; r )$ is negligibly small unless $ y \asymp N^2 r^3 $, in which case  
\begin{align}\label{5eq: Omega, +}
	 \Omega_N (  y; r ) = \frac{e  (   - 2 \sqrt{y/r}  ) \varvv_{-}  (  y, r  )  } { \sqrt{ N r^3} } + O  (T^{-A} ),
\end{align} 
where  the function $ \varvv_{-}  ( y, r  ) $ is $\log T$-inert.  
Also note here that $ \lambda \sqrt[3]{\rho} =  \sqrt[3]{N y} \Gt T^{\vepsilon} $ and $ \sqrt{\lambda}  \asymp \sqrt{N r}$ for $y \asymp N^2 r^3$.  Similarly, for $y, r < 0$, we have
\begin{align} \label{5eq: Omega, -}
	\Omega_N (  y; r ) = \frac{e  (   2 \sqrt{y/r}  ) \varvv_{+}  (  y, r  )  } { \sqrt{ N r^3} } + O  (T^{-A} ). 
\end{align} 
Moreover, in the case $ y r < 0 $, the integral $  \Omega_N (  y; r ) $ is always negligibly small. 

Let us return to the setting above as in \eqref{5eq: P(N)} and \eqref{5eq: after Voronoi}. First of all, as $c \Lt N/ T q  $ and $ N n_1^2 \Lt T^{3/2+\vepsilon}$,   by our assumption $q > T^{\vepsilon}$, for $|y| =    d^2 n/ c^3 n_1$,  we have indeed
\begin{align*}
	| N y |   \Gt \frac {N} {c^3 n_1} \Gt \frac {T^3 q^3} { N^2 n_1 } \Gt \frac {q^3 n_1^3} {T^{\vepsilon}} \Gt T^{\vepsilon}. 
\end{align*}  
Also note that the condition $ y \asymp N^2 r^3 $ amounts to 
\begin{align*}
	n \asymp \frac {N^2 n_1 \tau^3} {d^2 q^3}, 
\end{align*} 
the sign in \eqref{5eq: after Voronoi} must be opposite to that of $t$,  
while 
\begin{align*}
	\sqrt{\frac {y} {r}} = \frac {d \sqrt{qn}} {c \sqrt{n_1 t}}, \qquad \frac 1 {  {N |r|^3 } } = \frac { c^3 q^3 } { {N} |t|^3}     . 
\end{align*} 
After the applications of the Vorono\"i summation formula and the stationary phase analysis, we no longer need the restrictions $(\valpha, c) = 1$ and $q > T^{\vepsilon}$. Consequently, in view of \eqref{5eq: P(N)}, \eqref{5eq: after Voronoi}, \eqref{5eq: Omega, +}, and \eqref{5eq: Omega, -}, up to a negligible error, we have
\begin{equation*}
\begin{split}
 	\frac{M T} {N^2 n_1^2} &  \sum_{\pm}    \sum_{q \Lt N/T} q^2    \int_{\,\tau}^{2 \tau}     \sum_{c \Lt N/ T q}   \frac 1 {c^2}    \sum_{   \valpha      (\mathrm{mod} \, c) }       \\
	\cdot \, &  \bigg|    \sum_{d\mid c n_1   }   d   \sum_{n \shskip \asymp  {N^2 n_1 \tau^3} / {d^2 q^3}}     A(n , d)      {S\left(\pm n,  {\valpha}n_1;c n_1/d\right)}   e \bigg( \hspace{-2pt} \pm \frac {2 d \sqrt{qn}} {c \sqrt{n_1 t}}  \bigg) \varvv_{\pm} \bigg(\frac{d^2 n} {c^3 n_1   }, \frac {t} {cq} \bigg)  \bigg|^2   \frac {\nd t } {\,t^3 } . 
\end{split}
\end{equation*}
For notational succinctness, let us only consider the $+$ contribution. 
Next, we  make the change   $1/\sqrt{t} \ra t$ and pull the $d$-sum out of the square by the Cauchy inequality, giving
\begin{equation*}
	\begin{split}
	& 	\frac{M T^{1+\vepsilon}  } {N^2 n_1^2 \tau^{3/2}}    \sum_{q \Lt N/T} q^2   \int_{1/ \sqrt{2\tau}}^{1/\sqrt{\tau} }   \sum_{c \Lt N/ T q}   \frac 1 {c^2} \sum_{d\mid c n_1   }   d^2      \\
		  	\cdot &  \sum_{   \valpha      (\mathrm{mod} \, c) }       \bigg|  \sum_{n \shskip \asymp  {N^2 n_1 \tau^3} / {d^2 q^3}}     A(n , d)      {S\left(   n,  {\valpha}n_1;c n_1/d\right)}   e \bigg(    \frac {2 d \sqrt{qn} t} {c \sqrt{n_1  }}  \bigg) \varvv_{+} \bigg(\frac{d^2 n} {c^3 n_1   }, \frac {1} {cq t^2} \bigg)  \bigg|^2     { \nd t }   . 
	\end{split}
\end{equation*} 

\subsection{Evaluation of the Exponential Sum}
After opening the square in the $\valpha$-sum,  we obtain the exponential sum
\begin{align*}
	\sum_{   \valpha      (\mathrm{mod} \, c) } &  S (  m , \valpha n_1; c n_1/d) S (  n, \valpha n_1; c n_1/d) \\
	& = \sum_{   \valpha      (\mathrm{mod} \, c) } \mathop{\sumx \sumx}_{ \beta, \gamma (\mathrm{mod}\, c n_1/d)} e \bigg(   \frac { \widebar{\beta} m  - \widebar{\gamma} n  } {c n_1/d} + \frac {\valpha d (\beta - \gamma)} {c}  \bigg). 
\end{align*}
By orthogonality, the $\valpha$-sum yields the congruence condition $ d (\beta - \gamma) \equiv 0 \,(\mathrm{mod}\, c) $, or equivalently $ \beta \equiv \gamma \,(\mathrm{mod}\, c/ (c, d)) $. For brevity, set 
$$   c' = \frac {c} {(c, d)}, \qquad n_1' = \frac {n_1} {d / (c, d)}.   $$  
Thus we may write $ \gamma = \beta + c' \vnu $ for $ \vnu\, (\mathrm{mod} \, n_1')$ such that $( \beta + c' \vnu,  n_1') = 1$, so   the whole $\valpha$-sum is turned into
\begin{equation*}
	\begin{split}
	  c  \,  \sumx_{\beta (\mathrm{mod}\, c' n_1')}    \mathop{\sum_{\vnu (\mathrm{mod} \, n_1')} }_{(\beta+c'\vnu, n_1') = 1 }    S_{\beta}^{+}  (\sqrt{q} t; c, n_1,d; N)  \overline{S_{\beta+c' \vnu}^{+} (\sqrt{q} t; c, n_1,d; N) }     ,
	\end{split}
\end{equation*} 
where 
\begin{align*}  
	S^{+}_{\beta}  (\sqrt{q} t; c, n_1,d; N)   =   \sum_{n \shskip \asymp  {N^2 n_1 \tau^3} / {d^2 q^3}}     A(n , d)   e \bigg(   \frac {\widebar{\beta} n } {c n_1/d} \bigg)  e \bigg(      \frac {2 d \sqrt{qn} t} {c \sqrt{n_1  }}  \bigg) \varvv_{+} \bigg(\frac{d^2 n} {c^3 n_1   }, \frac {1} {cq t^2} \bigg). 
\end{align*}
Similar to the proof of Lemma \ref{lem: P(A)}, we apply  the AM--GM inequality to the   $S$-product  so that the double sum above is  bounded by (half of) the sum of
\begin{align*}
	c  \,  \sumx_{\beta (\mathrm{mod}\, c' n_1')}    \mathop{\sum_{\vnu (\mathrm{mod} \, n_1')} }_{(\beta+c'\vnu, n_1') = 1 }   \big|S_{\beta}^{+}  (\sqrt{q} t; c, n_1,d; N)\big|^2  
\end{align*}
and 
\begin{align*} 
	 c   \,  \sumx_{\beta (\mathrm{mod}\, c' n_1')}   \mathop{\sum_{\vnu (\mathrm{mod} \, n_1')} }_{(\beta+c'\vnu, n_1') = 1 }   \big|  {S_{\beta+c' \vnu}^{+} (\sqrt{q} t; c, n_1,d; N) } \big|^2 ,
\end{align*} 
while, by the change $ \beta + c' \vnu \ra \beta $, the second sum is turned into 
\begin{align*}
	c  \,  \sumx_{\beta (\mathrm{mod}\, c' n_1')}    \mathop{\sum_{\vnu (\mathrm{mod} \, n_1')} }_{(\beta-c'\vnu, n_1') = 1 }   \big|S_{\beta}^{+}  (\sqrt{q} t; c, n_1,d; N)\big|^2  . 
\end{align*}
By dropping the coprimality conditions $(\beta \pm c' \vnu, n_1') = 1$, we arrive at the bound: 
\begin{align*}
	c n_1 \sumx_{\beta (\mathrm{mod}\, c' n_1')}       \big|  {S_{\beta }^{+} (\sqrt{q} t; c, n_1,d; N) } \big|^2 . 
\end{align*} Recall here that $c' n_1' = cn_1/ d$.  So far, we have obtained the expression: 
\begin{align*}
	\frac{M T^{1+\vepsilon}} {N^2 n_1 \tau^{3/2} }    \sum_{q \Lt N/T} q^2   \int_{1/ \sqrt{2\tau}}^{1/\sqrt{\tau} }   \sum_{c \Lt N/ T q}   \frac 1 {c } \sum_{d\mid c n_1   }   d^2  \sumx_{\beta (\mathrm{mod}\, c n_1/d)}   \big| S^{+}_{\beta}  (\sqrt{q} t; c, n_1, d; N) \big|^2   \nd t. 
\end{align*}

\subsection{Large Sieve and Final Estimation} 
Note that we necessarily have $ d^2 q^3 \Lt N^2 n_1 \tau^3  $ as otherwise the $n$-sum would be empty.   Let us introduce the new variable $h = c n_1/d$ to simplify the sum above into
\begin{align*}
	 \frac{M T^{1+\vepsilon}} {N^2  \tau^{3/2} }   &  \mathop{\sum\sum}_{d^2 q^3 \Lt N^2 n_1 \tau^3} d q^2   \int_{1/ \sqrt{2\tau}}^{1/\sqrt{\tau} }   \sum_{h \Lt N n_1/ T d q}   \frac 1 { h }  \, \sumx_{\beta (\mathrm{mod}\, h)}      \\
 	& \cdot \bigg|  \sum_{n \shskip \asymp  {N^2 n_1 \tau^3} / {d^2 q^3}}      A(n , d)   e \bigg(   \frac {\widebar{\beta} n } {h}  \bigg) e \bigg(      \frac {2   \sqrt{n_1 qn} t} { h }  \bigg)  \varvv_{+} \bigg(\frac{n_1^2 n} { d h^3  }, \frac {n_1} { d h q   t^2} \bigg) \bigg|^2   \nd t   . 
\end{align*}
Finally,  the weight $\varvv_+$ is harmless as it may be handled by the Mellin inversion and the Cauchy--Schwarz inequality, at the loss of only $T^{\vepsilon}$,  and hence an application of  Lemma \ref{lem: Young's LS} with $\gamma =1/2$,  $\tau \ra 1/\sqrt{\tau}$, $v = 1/ 2 \sqrt{n_1 q}$,  $C = O (N n_1 /Tdq)$, and $N \ra O (N^2 n_1 \tau^3 / d^2 q^3)$   yields the estimate
\begin{align*}
	 \frac{M T^{1+\vepsilon}} {N^2  \tau^{3/2} }     \mathop{\sum\sum}_{d^2 q^3 \Lt N^2 n_1 \tau^3} d q^2 \bigg( \frac 1 {\sqrt{\tau}} \frac {N n_1} {T d q} + \frac 1 {\sqrt{n_1 q}} \frac { N \sqrt{n_1} \tau^{3/2} } {d q^{3/2} } \bigg) \sum_{n \shskip \asymp  {N^2 n_1 \tau^3} / {d^2 q^3}}     | A(n , d) |^2  & \\
	 \Lt \frac{M  } {N    }  T^{\vepsilon}     \sum_{q \Lt N^{2/3} n_1^{1/3} \tau } \bigg( \frac { n_1 q } { \tau^2} + T \bigg)  \mathop{\sum \sum}_{ d^2 n \Lt N^2 n_1 \tau^3 / q^3  } |A(n, d)|^2 & .
\end{align*}
By the Rankin--Selberg estimate \eqref{2eq: RS}, along with $\tau \Lt M^{\vepsilon} / M$,  this is   bounded by 
\begin{align*}
	\frac{M  } {N    }  T^{\vepsilon}      \sum_{q   } \bigg( \frac { n_1 q } {  \tau^2} + T  \bigg) \frac {N^2 n_1 \tau^3} {q^3} \Lt \big(M  n_1  {\tau}   +   {MT \tau^3}   \big) N n_1 T^{\vepsilon} \Lt \bigg( n_1 + \frac {T} {M^2  }  \bigg)   {N n_1 T^{ \vepsilon}}  , 
\end{align*}
as desired.




\begin{thebibliography}{AHLQ}
	
	\bibitem[AHLQ]{AHLQ-Bessel}
	K.~Aggarwal, R.~Holowinsky, Y.~Lin, and Z.~Qi.
	\newblock A {B}essel delta method and exponential sums for {${\rm GL}(2)$}.
	\newblock {\em Q. J. Math.}, 71(3):1143--1168, 2020.
	
	\bibitem[ALM]{Munshi-A-L-GL(3)}
	K.~Aggarwal, W.~H. Leung, and R.~Munshi.
	\newblock {S}hort second moment bound and subconvexity for {$\rm GL(3)$}
	{$L$}-functions.
	\newblock {\em J. Eur. Math. Soc. (JEMS)}, DOI 10.4171/JEMS/1667, 2025.
	
	\bibitem[BKY]{BKY-Mass}
	V.~Blomer, R.~Khan, and M.~P. Young.
	\newblock Distribution of mass of holomorphic cusp forms.
	\newblock {\em Duke Math. J.}, 162(14):2609--2644, 2013.
	
	\bibitem[Blo]{Blomer}
	V.~Blomer.
	\newblock Subconvexity for twisted {$L$}-functions on {${\rm GL}(3)$}.
	\newblock {\em Amer. J. Math.}, 134(5):1385--1421, 2012.
	
	\bibitem[CL]{Chandee-Li-GL(4)-Special-Points}
	V.~Chandee and X.~Li.
	\newblock The second moment of {$GL(4)\times GL(2)$} {$L$}-functions at special
	points.
	\newblock {\em Adv. Math.}, 365:107060, 39, 2020.
	
	\bibitem[DI]{DI-Nonvanishing}
	J.-M. Deshouillers and H.~Iwaniec.
	\newblock The nonvanishing of {R}ankin-{S}elberg zeta-functions at special
	points.
	\newblock {\em The {S}elberg {T}race {F}ormula and {R}elated {T}opics
		({B}runswick, {M}aine, 1984)}, {Contemp. Math.}, Vol. 53, 
	51--95. Amer. Math. Soc., Providence, RI, 1986.
	
	\bibitem[DIPS]{DIPS-Maass}
	J.-M. Deshouillers, H.~Iwaniec, R.~S. Phillips, and P.~Sarnak.
	\newblock Maass cusp forms.
	\newblock {\em Proc. Nat. Acad. Sci. U.S.A.}, 82(11):3533--3534, 1985.
	
	
	\bibitem[Gol]{Goldfeld}
	D.~Goldfeld.
	\newblock {\em Automorphic {F}orms and {$L$}-{F}unctions for the {G}roup {${\rm
				GL}(n, \text{\bf{R}})$}},  {Cambridge Studies in Advanced Mathematics, Vol. 99}.
	\newblock Cambridge University Press, Cambridge, 2006.
	
	\bibitem[GR]{G-R}
	I.~S. Gradshteyn and I.~M. Ryzhik.
	\newblock {\em Table of {I}ntegrals, {S}eries, and {P}roducts}.
	\newblock Elsevier/\hspace{0pt}Academic Press, Amsterdam, 7th edition, 2007.
	
	\bibitem[IK]{IK}
	H.~Iwaniec and E.~Kowalski.
	\newblock {\em Analytic {N}umber {T}heory},  {American
		Mathematical Society Colloquium Publications, Vol. 53}.
	\newblock American Mathematical Society, Providence, RI, 2004.
	
	\bibitem[IL]{Iwaniec-Li-Ortho}
	H.~Iwaniec and X.~Li.
	\newblock The orthogonality of {H}ecke eigenvalues.
	\newblock {\em Compos. Math.}, 143(3):541--565, 2007.
	
	\bibitem[Iwa1]{Iwaniec-L(1)}
	H.~Iwaniec.
	\newblock Small eigenvalues of {L}aplacian for {$\Gamma_0(N)$}.
	\newblock {\em Acta Arith.}, 56(1):65--82, 1990.
	
	\bibitem[Iwa2]{Iwaniec-Spectral-Weyl}
	H.~Iwaniec.
	\newblock The spectral growth of automorphic {$L$}-functions.
	\newblock {\em J. Reine Angew. Math.}, 428:\allowbreak139--159, 1992.
	
	\bibitem[Jut]{Jutila-LS}
	M.~Jutila.
	\newblock On spectral large sieve inequalities.
	\newblock {\em Funct. Approx. Comment. Math.}, 28:7--18, 2000.
	
	\delete{\bibitem[Kim]{Kim-Sarnak}
	H.~H. Kim.
	\newblock Functoriality for the exterior square of {${\rm GL}_4$} and the
	symmetric fourth of {${\rm GL}_2$}.
	\newblock {\em J. Amer. Math. Soc.}, 16(1):139--183, 2003.
	\newblock With appendix 1 by Dinakar Ramakrishnan and appendix 2 by Kim and
	Peter Sarnak.}
	
	\bibitem[KPY]{KPY-Stationary-Phase}
	E.~M. Kiral, I.~Petrow, and M.~P. Young.
	\newblock Oscillatory integrals with uniformity in parameters.
	\newblock {\em J. Th\'eor. Nombres Bordeaux}, 31(1):145--159, 2019.
	
	\bibitem[Kuz]{Kuznetsov}
	N.~V. Kuznetsov.
	\newblock {P}etersson's conjecture for cusp forms of weight zero and {L}innik's
	conjecture. {S}ums of {K}loosterman sums.
	\newblock {\em Math. Sbornik}, 39:299--342, 1981.
	
	\bibitem[Li]{XLi2011}
	X.~Li.
	\newblock Bounds for {${\rm GL}(3)\times {\rm GL}(2)$} {$L$}-functions and
	{${\rm GL}(3)$} {$L$}-functions.
	\newblock {\em Ann. of Math. (2)}, 173\allowbreak(1):301--336, 2011.
	
	\bibitem[Luo1]{Luo-Non-Vanishing}
	W.~Luo.
	\newblock On the nonvanishing of {R}ankin-{S}elberg {$L$}-functions.
	\newblock {\em Duke Math. J.}, 69(2):411--425, 1993.
	
	\bibitem[Luo2]{Luo-Twisted-LS}
	W.~Luo.
	\newblock The spectral mean value for linear forms in twisted coefficients of
	cusp forms.
	\newblock {\em Acta Arith.}, 70(4):377--391, 1995.
	
	\bibitem[Luo3]{Luo-LS}
	W.~Luo.
	\newblock Spectral mean-value of automorphic {$L$}-functions at special points.
	\newblock  {\em Analytic {N}umber {T}heory, {V}ol.\ 2 ({A}llerton {P}ark, {IL},
		1995)}, {Progr. Math.}, Vol. 139,  621--632. Birkh\"auser
	Boston, Boston, MA, 1996.
	
	\bibitem[Luo4]{Luo-Weyl}
	W.~Luo.
	\newblock Nonvanishing of {$L$}-values and the {W}eyl law.
	\newblock {\em Ann. of Math. (2)}, 154(2):477--502, 2001.
	
	\bibitem[Luo5]{Luo-2nd-Moment}
	W.~Luo.
	\newblock Refined asymptotics for second spectral moment of {R}ankin-{S}elberg
	{$L$}-functions at the special points.
	\newblock {\em Int. Math. Res. Not. IMRN}, (23):5457--5483, 2012.
	
	\bibitem[Mon]{Montgomery-Topics}
	H.~L. Montgomery.
	\newblock {\em Topics in {M}ultiplicative {N}umber {T}heory},    {Lecture Notes in Mathematics, Vol. 227}.
	\newblock Springer-Verlag, Berlin-New York, 1971.
	
	\bibitem[MS1]{Miller-Schmid-2006}
	S.~D. Miller and W.~Schmid.
	\newblock Automorphic distributions, {$L$}-functions, and {V}oronoi summation
	for {${\rm GL}(3)$}.
	\newblock {\em Ann. of Math. (2)}, 164(2):423--488, 2006.
	
	\bibitem[MS2]{Miller-Schmid-2009}
	S.~D. Miller and W.~Schmid.
	\newblock A general {V}oronoi summation formula for {${\rm GL}(n,\mathbb{Z})$}.
	\newblock   {\em Geometry and {A}nalysis. {N}o. 2}, Adv. Lect. Math., Vol. 18,
	  173--224. Int. Press, Somerville, MA, 2011.
	
	\bibitem[MV]{Michel-Venkatesh-GL2}
	P.~Michel and A.~Venkatesh.
	\newblock The subconvexity problem for {${\rm GL}_2$}.
	\newblock {\em Publ. Math. Inst. Hautes \'Etudes Sci.}, (111):171--271, 2010.
	
	\bibitem[MZ]{MZ-Voronoi}
	S.~D. Miller and F.~Zhou.
	\newblock The balanced {V}oronoi formulas for {${\rm GL}(n)$}.
	\newblock {\em Int. Math. Res. Not. IMRN}, (11):3473--3484, 2019.
	
	\bibitem[PS]{Phillips-Sarnak}
	R.~S. Phillips and P.~Sarnak.
	\newblock On cusp forms for co-finite subgroups of {${\rm PSL}(2,{\bf R})$}.
	\newblock {\em Invent. Math.}, 80(2):339--364, 1985.
	
	\bibitem[Qi1]{Qi-Bessel}
	Z.~Qi.
	\newblock Theory of fundamental {B}essel functions of high rank.
	\newblock {\em Mem. Amer. Math. Soc.}, 267\allowbreak(1303):vii+123, 2020.
	
	\bibitem[Qi2]{Qi-GL(3)}
	Z.~Qi.
	\newblock Subconvexity for {$L$}-functions on {$\rm GL_3$} over number fields.
	\newblock {\em J. Eur. Math. Soc. (JEMS)}, 26\allowbreak(3):1113--1192, 2024.
	
	\bibitem[Sog]{Sogge}
	C.~D. Sogge.
	\newblock {\em Fourier {I}ntegrals in {C}lassical {A}nalysis},  
	{Cambridge Tracts in Mathematics, Vol. 105}.
	\newblock Cambridge University Press, Cambridge, 1993.
	
	\bibitem[Tit]{Titchmarsh-Riemann}
	E.~C. Titchmarsh.
	\newblock {\em The {T}heory of the {R}iemann {Z}eta-{F}unction}.
	\newblock The Clarendon Press, Oxford University Press, New York, 2nd
	edition, 1986. 
	
	\bibitem[Wat]{Watson}
	G.~N. Watson.
	\newblock {\em A {T}reatise on the {T}heory of {B}essel {F}unctions}.
	\newblock Cambridge University Press, Cambridge, England; The Macmillan
	Company, New York, 1944.
	
	\bibitem[You]{Young-GL(3)-Special-Points}
	M.~P. Young.
	\newblock The second moment of {$GL(3)\times GL(2)$} {$L$}-functions at special
	points.
	\newblock {\em Math. Ann.}, 356(3):1005--1028, 2013.
	
\end{thebibliography}
\end{document}